\DeclareMathOperator{\real}{\mathrm{Re}}
\DeclareMathOperator{\SC}{\mathscr{S}^*_\mathcal{R}}
\numberwithin{equation}{section}
\newtheorem{theorem}{Theorem}[section]
\newtheorem{lemma}[theorem]{Lemma}
\newtheorem{corollary}[theorem]{Corollary}
\newtheorem{conjecture}[theorem]{Conjecture}
\theoremstyle{remark}
\newtheorem{remark}[theorem]{Remark}
\begin{document}

\title[Coefficients and Radius Estimates of   Starlike Functions]{Coefficients of  the Inverse Functions and Radius Estimates of  Certain  Starlike Functions}
\author[Adiba Naz]{Adiba Naz}

\address{Department of Mathematics, University of Delhi, Delhi--110 007, India}
\email{adibanaz81@gmail.com}

\author[Sushil Kumar]{Sushil Kumar}

\address{Bharati Vidyapeeth's College of Engineering, Delhi--110 063 India}
\email{sushilkumar16n@gmail.com}

\author[V. Ravichandran]{V. Ravichandran}

\address{Department of Mathematics, National Institute of Technology,  Tiruchirappalli--620 015,  India}
\email{vravi68@gmail.com}

\begin{abstract}Ma-Minda class (of starlike functions) consists of all normalized analytic functions $f$ on the unit disk for which the image of $zf'(z)/f(z)$ is contained in the some starlike region in the right-half plane. We obtain the best possible bounds on the second and third coefficient for the inverse functions of functions  in the Ma-Minda class.  The bounds on the Fekete-Szeg\"o functional and the second Hankel determinant of the inverse functions of the functions belonging to the Ma-Minda class   are also determined. Further, the bounds on the  first five coefficients of the  inverse functions are  investigated for two particular subclasses of  the  Ma-Minda class. In addition, some radius estimates associated with the two subclasses are also computed.
\end{abstract}

\keywords{Inverse coefficients, starlike functions, rational function, exponential function, Fekete-Szeg\"o inequality, Hankel determinants, radius problem}
\subjclass[2010]{30C45, 30C50, 30C80}

\maketitle

\section{Introduction}

Let $\mathscr{A}$ be a class of all functions $f$ that are analytic  in an open disk $\mathbb{D}:=\{z\in\mathbb{C}\colon|z|<1 \}$ and normalized by $f(0)=0$ and $f'(0)=1$. The subclass of $\mathscr{A}$ consisting of univalent functions is denoted by $\mathscr{S}$   and let class $\mathscr{P}$ denote the class of all functions  $p$ that are analytic  in $\mathbb{D}$ having positive real part and normalized by $p(0)=1$. An analytic function $f\colon\mathbb{D}\to\mathbb{C}$ is said to be subordinate to an analytic function $g\colon\mathbb{D}\to\mathbb{C}$, written as $f\prec g$, if there exists a Schwarz function $w$ with $w(0)=0$ and $|w(z)|<1$ that satisfies $f(z)=g(w(z))$ for $z\in\mathbb{D}$. In particular, if $g$ is univalent in $\mathbb{D}$, then $f\prec g$ if and only if $f(0)=g(0)$ and $f(\mathbb{D})\subseteq g(\mathbb{D})$.  Let a function $\varphi$ be  analytic and  univalent  with positive real part in $\mathbb{D}$ that maps $\mathbb{D}$ onto domains which are symmetric with respect to the real axis and starlike with respect to $\varphi(0)=1$ such that $\varphi'(0)>0$. For such a function $\varphi$, Ma and Minda \cite{MR1343506} gave a unified treatment  of various geometric properties such as growth, distortion, and covering theorems of the subclass    $\mathscr{S}^*(\varphi)$ defined by \begin{equation}\label{ma}\mathscr{S}^*(\varphi)=\left\{f\in\mathscr{A}\colon \frac{zf'(z)}{f(z)} \prec \varphi(z) \right\}.\end{equation} A class that unified several subclasses of starlike and convex functions were investigated by Shanmugam \cite{MR0994916} and he studied the convolution property of this and several related classes.   For $-1\leq B<A\leq 1$, $\mathscr{S}^*[A,B]:=\mathscr{S}^*((1+Az)/(1+Bz))$ is a well-known class consisting of Janowski starlike functions \cite{MR0328059}. The special case when $A=1-2\alpha$ and $B=-1$ with $0\leq \alpha< 1$ reduces to $\mathscr{S}^*(\alpha)$ consisting of starlike functions of order $\alpha$ \cite{MR783568}. In particular, $\mathscr{S}^*:=\mathscr{S}^*(0)$ is the class of starlike functions. Several  subclasses of starlike functions are special cases of $\mathscr{S}^*(\varphi)$ for varying superordinate function $\varphi$.
However this paper aims to consider the classes  $\mathscr{S}^*_e:=\mathscr{S}^*(e^z)$ consisting of functions $f\in\mathscr{A}$ such that $zf'(z)/f(z)$ lies in the domain $\{w\in\mathbb{C}\colon |\log w|<1\}$ and   $\SC:=\mathscr{S}^*(\varphi_\mathcal{R})$ where  \begin{equation}\label{rf}
\varphi_\mathcal{R}(z):= 1+\frac{z}{k}\left(\frac{k+z}{k-z}\right) = 1+\frac{1}{k}z+\frac{2}{k^2}z^2+\frac{2}{k^3}z^3+\cdots, \quad k=\sqrt{2}+1. \end{equation} The class $\mathscr{S}^*_e$ was  introduced by Mendiratta \textit{et al.\@} \cite{MR3394060} and the class $\SC$ was introduced by  Kumar and Ravichandran \cite{MR3496681}.

For a function $f\in\mathscr{S}$, there always exists an inverse function $f^{-1}$ defined on some disk $|\omega|<r_0(f)$ (where $r_0(f)\geq1/4$) having the Taylor series expansion
\begin{equation}
f^{-1}(\omega)=\omega+A_2 \omega^2+A_3 \omega^3+A_4 \omega^4+\cdots\label{inv}
\end{equation} near $\omega=0$.
In 1923, L\"owner \cite{MR1512136} investigated the sharp coefficient estimates for  the inverse function of $f\in\mathscr{S}$ using a parametric method. Later several authors \cite{MR3646796,MR681830,MR1140278,MR813267} started determining the initial coefficients of the inverse function belonging to  various subclasses of $\mathscr{S}$. Krzyz \textit{et al.\@} \cite{MR689590} obtained the sharp bounds  on first two coefficients of the inverse function  of a function lying in the class $\mathscr{S}^*(\alpha)$ and later their results were extended by Kapoor and Mishra \cite{MR2296897}.  Ali \cite{MR2055766} determined the first three coefficients of the inverse function  of strongly starlike functions of order $\alpha$  ($0<\alpha\leq1$). For $\beta>1$, Ali and Vasudevarao \cite{MR3436767} obtained the bounds of coefficients of the inverse function of functions belonging to  the class \[\mathscr{M}(\beta)=\left\{f\in\mathscr{A}\colon\real \dfrac{zf'(z)}{f(z)}<\beta,\; z\in\mathbb{D}  \right\}. \]
 This class was initially introduced by Uralegaddi \textit{et al.\@} \cite{MR1304483}.  Recently, Ravichandran and Verma \cite{MR3670865} discussed the inverse coefficient problem for functions in the class $\mathscr{S}^*[A,B]$ while Sok\'{o}{\l} and Thomas \cite{MR3796439} obtained the initial coefficients of the inverse function of functions belonging to  the class $\mathscr{S}^*_L:=\mathscr{S}^*(\sqrt{1+z})$ associated with lemniscate of Bernoulli which was introduced by Sok\'o{\l} and Stankiewicz \cite{MR1473947}.

The Hankel determinants play an important role in the study of the singularities and power series with integral coefficients.  The Hankel determinant $H_q(n)$ for a given function $f\in\mathscr{A}$ is the determinant of the matrix of order $q$ given by $H_q(n)=[a_{n+i+j-2}]$ where $a_1=1$ and $n$, $q$ are fixed positive integers.
 Determination of the exact bounds of $|H_q(n)|$ for various subclasses of analytic functions was investigated by many authors. Pommerenke \cite{MR0185105,MR0215976} first studied the Hankel determinant for the class $\mathscr{S}$ of univalent functions. Later, the Hankel determinant $H_2(n)$ was studied by Hayman \cite{MR0219715} for  mean univalent functions and by Noonam and Thomas \cite{MR0422607}  for mean $p$-valent functions. Noor \cite{MR875965,MR2396299} studied the Hankel determinant for close-to-convex and Bazilevic functions. Sharp estimates were obtained by several authors for the second Hankel determinant,  $H_2(2)=a_2a_4-a_3^2$. Fekete and Szeg\"o \cite{MR1574865} considered the second Hankel determinant $H_2(1)=a_3-a_2^2$ for the class $\mathscr{S}$. They estimated the upper bound for the \emph{Fekete-Szeg\"o functional} $|a_3-\mu a_2^2|$ where $\mu$ is any complex number. This functional plays a very important role in geometeric function theory. For instance, $a_3-a_2^2=S_f(0)/6$ where $S_f$ denote the Schwarzian derivative of a function $f\in\mathscr{S}$. There is a huge literature  on the Fekete-Szeg\"o functional and Hankel determinats for different  subclasses of $\mathscr{S}$. Many authors started investigating the Fekete-Szeg\"o functional and Hankel determinants of  the coefficients of the inverse function of functions belonging to various subclasses of $\mathscr{S}$. Ali \cite{MR2055766} maximized the Fekete-Szeg\"o functional  involving the coefficients of the inverse function of strongly starlike functions of order $\alpha$ ($0<\alpha\leq 1$). Thomas and Verma \cite{MR3646796} considered the class of strongly convex functions of order $\alpha$ and obtained the upper bounds on second Hankel determinant and Fekete-Szeg\"o functional using the coefficients of the inverse function. However, very few papers discussed the third Hankel determinant $H_3(1)=a_3(a_2a_4-a_3^2)-a_4(a_4-a_2a_3)+a_5(a_3-a_2^2)$. Babalola \cite{MR1234560}   investigated the upper bound on $H_3(1)$ for the well-known  classes of bounded turning,  starlike and convex functions while Prajapat \textit{et al.\@} \cite{MR3458966} investigated the same  for a class  of close-to-convex functions. Recently, Raza and Malik \cite{MR3339521} obtained the third Hankel determinant for the class $\mathscr{S}_L^*$ and  Zhang  \textit{et al.\@} \cite{zhang2018third} for the class $\mathscr{S}^*_e$.

The radius of starlikeness of a class $\mathscr{A}$ is the largest number $\mathscr{R}_{\mathscr{S}^*}(\mathscr{A})$ such that the function $f\in\mathscr{A}$ is starlike in the disk, $|z|<\mathscr{R}_{\mathscr{S}^*}$. For instance, the radius of starlikeness for the class $\mathscr{S}$ is $\tanh(\pi/4)\approxeq0.65579$  obtained by Grunsky.

Motivated by the above said works,  in the following section, we  investigate the first two sharp coefficients of the inverse function of functions belonging to  the Ma-Minda class $\mathscr{S}^*(\varphi)$.  We also estimate  the Fekete-Szeg\"o functional for the class $\mathscr{S}^*(\varphi)$.  In particular, we also obtain first five initial inverse coefficient estimates for the two subclasses, $\mathscr{S}^*_e$ and $\SC$. Moreover,  Section \ref{Hankel} provides the upper bound for the second Hankel determinant using the coefficients of the inverse function of functions belonging to  the  class $\mathscr{S}^*(\varphi)$. The bound on the third Hankel determinant for the class $\SC$ is also computed.  In the last section, the sharp $\SC$-radius and $\mathscr{S}^*_e$-radius for several  subclasses of the class $\mathscr{S}^*(\varphi)$ are obtained.

\section{Inverse Coefficient Estimates}\label{ce}
Suppose the function $f(z)=z+\sum_{n=2}^\infty a_nz^n\in \mathscr{S}^*(\varphi)$. Let  $p(z):= zf'(z)/f(z)=1+b_1 z+b_2 z^2+\cdots$ and  $\varphi(z)=1+B_1z+B_2z^2+\cdots$ (where $B_1>0$) be a function with the positive real part in $\mathbb{D}$. Then a simple calculation shows that
\begin{equation*}
a_2  =b_1, \qquad a_3 =\frac{1}{2}(b_1^2+b_2), \qquad a_4  =\frac{1}{6}(b_1^3+3 b_1b_2+2b_3)\end{equation*}and
\begin{equation*}
a_5= \frac{1}{24}(b_1^4+6b_1^2b_2+3b_2^2+8b_1b_3+6b_4).
\end{equation*}
We now express the coefficients $a_n$ ($n=2,3,4,5$) of the function $f\in\mathscr{S}^*(\varphi)$ in terms of the coefficient of the function $\varphi$.  For $f\in\mathscr{S}^*(\varphi)$, we have (see \cite{MR1343506})
\begin{align}
a_2&=\frac{1}{2}B_1c_1,\label{f5} \\
a_3&= \frac{1}{8}\big((B_1^2-B_1+B_2) c_1^2+2 B_1c_2\big),\label{f6} \\
a_4&=\frac{1}{48}\big((B_1^3-3B_1^2+3B_1B_2+2B_1-4B_2+2B_3)c_1^3+2(3B_1^2-4B_1+4B_2)c_1c_2 \notag\\&\quad +8B_1c_3\big) \label{f8},
\intertext{and}a_5 &=\frac{1}{384}\big((B_1^4-6B_1^3+6B_1^2B_2+11B_1^2-22B_1B_2+3B_2^2+8B_1B_3-6B_1+18B_2 \notag  \\ & \quad -18B_3+6B_4)c_1^3   + 4(3B_1^3-11B_1^2+11B_1B_2+9B_1-18B_2+9B_3)c_1^2c_2\label{f9}  \\ & \quad +   12(B_1^2-2B_1+2B_2)c_2^2 + 16(2B_1^2-  3B_1+3B_2) c_1c_3+48B_1c_4\big)\notag.
 \end{align}
Using the power series expansions of the functions $f$ and $f^{-1}$ given by \eqref{inv} in the relation $f(f^{-1}(\omega))=\omega$, or \[\omega=f^{-1}(\omega)+a_2(f^{-1}(\omega))^2+a_3 (f^{-1}(\omega))^3 +\cdots\]we obtain the following  relations for the coefficients of $f^{-1}$
\begin{align*}
A_2&= -a_2,\\
A_3&= 2 a_2^2-a_3,\\
A_4&=-5a_2^3+5 a_2 a_3-a_4\intertext{and}
A_5&=14 a_2^4-21a_2^2a_3+6a_2a_4+3a_3^2-a_5.
\end{align*}
Substituting the values of $a_i$'s from equations \eqref{f5} to \eqref{f9} in the above expresssion, we have
\begin{align}
A_2& = -\frac{1}{2}B_1c_1, \label{A2}\\
A_3& = \frac{1}{8}\big((3B_1^2+B_1-B_2)c_1^2-2B_1c_2\big),\label{A3}\\
A_4& = \frac{1}{24}\big((-8B_1^3-6B_1^2-B_1+2B_2+6B_1B_2-B_3)c_1^3+4(3B_1^2+B_1-B_2)c_1c_2\notag\\&\quad-4B_1c_3\big)\label{A4}
\intertext{and}
A_5&=\frac{1}{384}\big((125B_1^4+150B_1^3+55B_1^2+6B_1+15B_2^2-18B_2-150B_1^2B_2-110B_1B_2\notag\\
&\quad +18B_3+40B_1B_3-6B_4)c_1^4+4(-75B_1^3-55B_1^2-9B_1+18B_2+55B_1B_2\label{A5}\\& \quad -9B_3)c_1^2c_2+12(5B_1^2+2B_1-2B_2)c_2^2+16(10B_1^2+3B_1-3B_2)c_1c_3-48B_1c_4\big)\notag.
\end{align}
Since  \[ |A_3-\mu A_2^2|= |-a_3+2a_2^2-\mu a_2^2|=|a_3-(2-\mu)a_2^2|,  \] the following result follows from  the estimate of the Fekete-Szeg\"o functional   of  functions in $\mathscr{S}^*(\varphi)$.
\begin{theorem}\label{thm1}
	Let the function $f \in \mathscr{S}^*(\varphi)$ and  $f^{-1}(\omega)=\omega+\sum_{n=2}^{\infty}A_n\omega^n$ in some neighbourhood of the origin.
Then
	\begin{equation*}
	|A_3-\mu A_2^2|\leq\frac{B_1}{2}\max\{1, |\nu-1| \}
	\end{equation*}
	where \[ \nu= \frac{1}{B_1}((3-2\mu)B_1^2+B_1-B_2). \]
	In particular, we have
	\begin{equation*}
	  |A_2|\leq B_1\quad \text{and}\quad |A_3|\leq \frac{B_1}{2}\max\left\{1,\frac{1}{B_1}|3B_1^2-B_2| \right\}.
	\end{equation*} The bounds obtained are sharp.
\end{theorem}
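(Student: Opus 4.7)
The plan is to reduce this inverse-coefficient Fekete--Szeg\"o problem to the direct-coefficient Fekete--Szeg\"o problem for $\mathscr{S}^*(\varphi)$, exploiting the single algebraic identity highlighted just before the statement: $|A_3-\mu A_2^2|=|a_3-(2-\mu)a_2^2|$. Thus the inverse functional at parameter $\mu$ agrees, up to sign, with the direct functional at parameter $2-\mu$, and it suffices to invoke the Ma--Minda bound on the latter.

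First I would record the Ma--Minda Fekete--Szeg\"o estimate for $\mathscr{S}^*(\varphi)$. Inserting \eqref{f5} and \eqref{f6}, a short calculation gives
\[a_3-\lambda a_2^2=\frac{B_1}{4}\Bigl(c_2-\frac{1}{2B_1}\bigl((2\lambda-1)B_1^2+B_1-B_2\bigr)c_1^2\Bigr),\]
and then the classical Carath\'eodory estimate $|c_2-v c_1^2|\le 2\max\{1,|2v-1|\}$, valid for every $p(z)=1+c_1z+c_2z^2+\cdots\in\mathscr{P}$ and any complex $v$, yields
\[|a_3-\lambda a_2^2|\le\frac{B_1}{2}\max\Bigl\{1,\Bigl|\frac{(2\lambda-1)B_1^2-B_2}{B_1}\Bigr|\Bigr\}.\]

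Next I would specialise to $\lambda=2-\mu$, so that the argument of the absolute value becomes $((3-2\mu)B_1^2-B_2)/B_1$, which is precisely $\nu-1$ in the notation of the theorem. Combining with the reduction identity produces the main inequality. Setting $\mu=0$ then gives the stated bound on $|A_3|$, while $|A_2|\le B_1$ is immediate from \eqref{A2} together with $|c_1|\le 2$.

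For sharpness I would exhibit the two standard Ma--Minda extremals. The function $f_0$ defined by $zf_0'(z)/f_0(z)=\varphi(z^2)$ corresponds to $c_1=0$, $c_2=2$ and saturates the bound when the maximum is $1$, while $f_1$ defined by $zf_1'(z)/f_1(z)=\varphi(z)$ corresponds to $c_1=2$ and saturates it when the maximum equals $|\nu-1|$; their inverses realise equality in the claimed estimate. The only real ingredient is the Ma--Minda Fekete--Szeg\"o inequality itself, which is by now standard, so no substantive obstacle arises; the one piece of bookkeeping is verifying that the substitution $\lambda=2-\mu$ reproduces exactly the quantity $\nu-1$ appearing in the statement.
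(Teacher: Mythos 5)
Your proposal is correct and follows essentially the same route as the paper: both reduce $|A_3-\mu A_2^2|$ to the expression $\frac{B_1}{4}\bigl|c_2-\frac{1}{2B_1}((3-2\mu)B_1^2+B_1-B_2)c_1^2\bigr|$ and invoke the Ma--Minda estimate $|p_2-\frac{\nu}{2}p_1^2|\leq 2\max\{1,|\nu-1|\}$ (Lemma \ref{l1}), with the same extremal functions $zf'(z)/f(z)=\varphi(\epsilon z)$ and $\varphi(\epsilon z^2)$ for sharpness. Your detour through $a_3-(2-\mu)a_2^2$ versus the paper's direct use of \eqref{A2}--\eqref{A3} is only a cosmetic difference.
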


Theorem \ref{thm1} can be obtained as a direct application of  the following lemma.
\begin{lemma} \cite{MR1343506}\label{l1}
	Let the function $p(z)=1+\sum_{n=1}^{\infty}p_nz^n\in\mathscr{P}$, then for any complex number $\nu$, we have \[\left|p_2-\frac{\nu}{2} p_1^2\right|\leq 2 \max\{1, |\nu-1|\} =\begin{cases}
	2, \quad & \text{if \ } 0\leq\nu\leq2 \\
	2|\nu-1|,\quad & \text{elsewhere}.
	\end{cases}\]
\end{lemma}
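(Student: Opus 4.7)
The plan is to derive the lemma from the standard coefficient representation of functions with positive real part. I would write $p(z)=(1+w(z))/(1-w(z))$ for some Schwarz function $w(z)=c_1z+c_2z^2+\cdots$, and compare power series to obtain the identities $p_1=2c_1$ and $p_2=2c_2+2c_1^2$. Applied to the Schwarz--Pick inequality $|c_2|\leq 1-|c_1|^2$, these yield the key estimate
\[ |2p_2-p_1^2|\;=\;4|c_2|\;\leq\;4-|p_1|^2. \]

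Next I would rewrite the Fekete--Szeg\"o quantity so that this bound is directly applicable, via
\[ p_2-\frac{\nu}{2}p_1^2 \;=\; \frac{1}{2}\bigl(2p_2-p_1^2\bigr)+\frac{1-\nu}{2}p_1^2. \]
The triangle inequality combined with the key estimate then gives
\[ \left|p_2-\frac{\nu}{2}p_1^2\right|\;\leq\; \frac{1}{2}\bigl(4-|p_1|^2\bigr)+\frac{|\nu-1|}{2}|p_1|^2 \;=\; 2+\frac{|\nu-1|-1}{2}|p_1|^2. \]

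Setting $t=|p_1|^2\in[0,4]$ (using the classical bound $|p_1|\leq 2$), the right-hand side is affine in $t$, so its maximum on $[0,4]$ is attained at an endpoint. If $|\nu-1|\leq 1$ the slope is nonpositive and the maximum is $2$, attained at $t=0$; if $|\nu-1|>1$ the slope is positive and the maximum is $2|\nu-1|$, attained at $t=4$. Together these give exactly $2\max\{1,|\nu-1|\}$. The case split in the statement is the real specialization, since for real $\nu$ the condition $|\nu-1|\leq 1$ is equivalent to $0\leq\nu\leq 2$.

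The only nontrivial ingredient is the parametric estimate $|2p_2-p_1^2|\leq 4-|p_1|^2$; once that is in hand, what remains is a single triangle inequality and a linear one-variable optimization, so I expect no serious obstacle. Sharpness, which is implicit in the lemma and is what makes Theorem \ref{thm1} sharp, can be verified by testing $p(z)=(1+z^2)/(1-z^2)$ (giving $p_1=0$, $p_2=2$) in the regime $|\nu-1|\leq 1$ and $p(z)=(1+z)/(1-z)$ (giving $p_1=p_2=2$) in the regime $|\nu-1|>1$.
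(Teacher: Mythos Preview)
Your argument is correct: the identities $p_1=2c_1$, $p_2=2c_2+2c_1^2$ and the Schwarz--Pick bound $|c_2|\le 1-|c_1|^2$ give $|2p_2-p_1^2|\le 4-|p_1|^2$, and the rest is a clean triangle-inequality plus linear optimization. The sharpness witnesses you name are also the right ones.

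As for comparison with the paper: there is nothing to compare. The paper does not prove Lemma~\ref{l1}; it merely quotes it from Ma and Minda \cite{MR1343506} and uses it as a black box (to derive Theorem~\ref{thm1}, the bound on $|A_4|$ in Theorem~\ref{thm2}, and the Fekete--Szeg\"o estimate \eqref{cor1}). Your proposal therefore goes further than the paper by actually supplying a proof, and the route you take---via the Carath\'eodory/Schwarz parametrization---is the standard one behind the Ma--Minda result. Note also that the estimate $|p_2-\tfrac12 p_1^2|\le 2-\tfrac12|p_1|^2$ you derive is exactly inequality \eqref{eq1}, which the paper separately cites in Lemma~\ref{lem4}; your argument thus recovers both lemmas at once.
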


Indeed, using equations \eqref{A2} and \eqref{A3}, we have
	\begin{equation*}
	|A_3-\mu A_2^2|=\frac{B_1}{4}\left|c_2-\frac{1}{2B_1}((3-2\mu)B_1^2+B_1-B_2)c_1^2 \right|.
	\end{equation*} Now the result follows from Lemma \ref{l1}.
	Note that the equality holds for the bound on $A_2$ if and only if the function $f$ is given by $zf'(z)/f(z)=\varphi(\epsilon z)$ where $|\epsilon|=1$.	If $|3B_1^2-B_2|<B_1$, then equality holds for the bound on $A_3$ if and only if the function $f$ is given by $zf'(z)/f(z)=\varphi(\epsilon z^2)$ and if $|3B_1^2-B_2|>B_1$, then equality holds  if and only if the function $f$ is given by $zf'(z)/f(z)=\varphi(\epsilon z)$.  If $|3B_1^2-B_2|=B_1$, then equality holds if and only if the $f$ is given by $zf'(z)/f(z)=\big(\lambda\varphi(\epsilon z)+(1-\lambda)(\varphi(\epsilon z))^{-1}\big)^{-1}$ where $0\leq\lambda\leq1$.	

\begin{remark}
	Letting $\varphi(z)=((1+z)/(1-z))^{\alpha}$ for $0<\alpha\leq1$, then Theorem \ref{thm1}  reduces to \cite[Theorem 1, p.~68]{MR2055766} for the uppper bounds on $|A_2|$ and $|A_3|$. Taking $\varphi(z)=(1+(1-2\alpha)z)/(1-z)$ where $0<\alpha<1$, then Theorem \ref{thm1} simplifies to \cite[Theorem 1, p.~105]{MR689590}. 	If $\varphi(z)=\sqrt{1+z}$, then  $|A_2|\leq1/2$ and $|A_3|\leq 7/16$   which are obtained in \cite[Theorem 4.1, p.~90]{MR3796439} and when $f\in\mathscr{S}^*[A,B]$ where $-1\leq B\leq 1<A$, then $|A_2|\leq A-B$ and $|A_3|\leq (3 A^2 - 5 A B + 2 B^2)/2$ which are proved in \cite[Theorem 2.1, p.~3541]{MR3670865}.
\end{remark}
Next, we estimate the first five initial coefficients of the inverse function of functions belonging to  the functions belonging to $\mathscr{S}^*_e$.
\begin{theorem}\label{thm2}
	Let the function $f\in\mathscr{S}^*_e$ and $f^{-1}(\omega)=\omega+\sum_{n=2}^{\infty}A_n\omega^n$ in some neighbourhood of the origin. Then the following sharp estimates hold:
	\begin{equation*}
	|A_2|\leq 1,\quad |A_3|\leq 5/4, \quad|A_4|\leq 31/18\quad\text{and}\quad |A_5|\leq 361/144.
	\end{equation*}
\end{theorem}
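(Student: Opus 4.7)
The plan is to specialize the general expressions \eqref{A2}--\eqref{A5} to $\varphi(z)=e^z$, for which $B_1=1$, $B_2=1/2$, $B_3=1/6$ and $B_4=1/24$, and then to bound each $|A_n|$ by standard estimates on Carath\'eodory coefficients. The estimate $|A_2|\leq 1$ is immediate from \eqref{A2} together with $|c_1|\leq 2$. For $|A_3|$ I would simply invoke Theorem~\ref{thm1} with $\mu=0$: since $|3B_1^2-B_2|=5/2>1=B_1$, the maximum in that theorem is attained by its second argument and yields $|A_3|\leq \tfrac12|3B_1^2-B_2|=5/4$.

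For $|A_4|$ I would first substitute the numerical values of $B_i$ into \eqref{A4} to reduce $A_4$ to a concrete linear combination of $c_1^3$, $c_1c_2$ and $c_3$, and then invoke the Libera--Zlotkiewicz representation of $c_2,c_3$: after a rotation making $c_1=c\in[0,2]$, one writes
\[2c_2=c^2+(4-c^2)x,\qquad 4c_3=c^3+2c(4-c^2)x-c(4-c^2)x^2+2(4-c^2)(1-|x|^2)y,\]
with $|x|,|y|\leq 1$. Substituting and taking moduli converts the task into maximizing a real function of $(c,\rho,\sigma)\in[0,2]\times[0,1]\times[0,1]$, where $\rho=|x|$ and $\sigma=|y|$. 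A short calculus argument---first optimize over $\sigma$, then over $\rho$ as a concave quadratic on the relevant range, and finally check that the resulting function of $c$ is increasing on $[0,2]$---pins the extremum at the corner $c=2$, where the factors $(4-c^2)$ collapse and the expression equals $31/18$.

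The treatment of $|A_5|$ would follow exactly the same blueprint, after supplementing the above with the analogous representation for $c_4$ in terms of a further disk parameter. This reduces $|A_5|$ to the supremum of a real function on $[0,2]\times[0,1]^3$, and this is where I expect the principal obstacle to lie: the proliferation of cubic and quartic cross-terms forces a case analysis (split according to whether $c$ falls below or above a critical threshold at which the interior maximum in $\rho$ enters the admissible set), together with a careful monotonicity check along each active boundary piece. As in the $A_4$ case, the extremum is ultimately pinned at the corner $c_1=c_2=c_3=c_4=2$, yielding $361/144$.

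Sharpness of all four bounds is witnessed simultaneously by the function $f_0\in\mathscr{S}_e^*$ determined by $zf_0'(z)/f_0(z)=e^z$, which corresponds to the Schwarz function $w(z)=z$ and hence to $c_n=2$ for every $n$. Direct computation of $a_2,a_3,a_4,a_5$ for $f_0$ and substitution into the inversion relations $A_2=-a_2$, $A_3=2a_2^2-a_3$, $A_4=-5a_2^3+5a_2a_3-a_4$ and $A_5=14a_2^4-21a_2^2a_3+6a_2a_4+3a_3^2-a_5$ yield precisely $1$, $5/4$, $31/18$ and $361/144$, confirming equality in each of the four bounds.
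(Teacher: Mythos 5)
Your handling of $|A_2|$, $|A_3|$ and of sharpness matches the paper: the first two bounds do follow from Theorem~\ref{thm1} with $B_1=1$, $B_2=1/2$, and the extremal function $f_0$ with $zf_0'(z)/f_0(z)=e^z$ is exactly the one the paper exhibits. Your route to $|A_4|$ via the Libera--Z{\l}otkiewicz representation (Lemma~\ref{l2}) is a genuine alternative to the paper's argument and does close: after substituting into \eqref{A4} one is left with maximizing
\begin{equation*}
\frac{31}{6}c^3+5c(4-c^2)\rho+c(4-c^2)\rho^2+2(4-c^2)(1-\rho^2)
\end{equation*}
over $(c,\rho)\in[0,2]\times[0,1]$; for $c\geq 4/7$ the $\rho$-derivative is nonnegative, the restriction to $\rho=1$ equals $-\tfrac{5}{6}c^3+24c$, which is increasing on $[0,2]$ and attains $124/3$ at $c=2$, and the interior-$\rho$ branch for $c<4/7$ stays far below this; dividing by $24$ gives $31/18$. (The paper instead splits off $4(c_3-2c_1c_2+c_1^3)$, bounds it by $8$ via inequality \eqref{eq2}, and applies Lemma~\ref{l1} with $\nu=43/18$ to the remainder $-6c_1c_2+\tfrac{43}{6}c_1^3$.)

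The genuine gap is the bound $|A_5|\leq 361/144$. Your plan requires a Libera--Z{\l}otkiewicz-type parametric formula for $c_4$, which Lemma~\ref{l2} does not provide and which is considerably more involved than the formulas for $c_2$ and $c_3$; and you then defer the resulting optimization over $[0,2]\times[0,1]^3$, which you yourself call ``the principal obstacle,'' asserting without any argument that the maximum sits at the corner $c_1=c_2=c_3=c_4=2$. Nothing in the proposal excludes an interior or edge maximum exceeding $361/144$, so the estimate is not established. The paper avoids this optimization entirely by an algebraic decomposition of \eqref{A5}: it writes $A_5=\tfrac{1}{8}\bigl(B+\tfrac{11}{6}c_1C+\tfrac{5}{4}c_1^2D-\tfrac{1}{2}DE+\tfrac{13}{288}c_1^4\bigr)$ with $B=c_1^4+c_2^2-3c_1^2c_2+2c_1c_3-c_4$, $C=c_1^3-2c_1c_2+c_3$, $D=c_1^2-c_2$ and $E=c_2-\tfrac{1}{2}c_1^2$, then invokes the inequalities \eqref{eq2}--\eqref{eq3} of Lemma~\ref{lem4} to get $|B|\leq 2$, $|C|\leq 2$, Lemma~\ref{l1} for $|D|\leq 2$, and \eqref{eq1} for $|E|\leq 2-\tfrac{1}{2}|c_1|^2$; the triangle inequality then reduces everything to the single increasing polynomial $\tfrac{1}{8}\bigl(4+\tfrac{11}{3}|c_1|+2|c_1|^2+\tfrac{13}{288}|c_1|^4\bigr)$ on $[0,2]$, evaluated at $|c_1|=2$. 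To complete your proof you should either adopt such a decomposition into functionals with known sharp bounds, or actually carry out the multi-parameter maximization you have only announced.
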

In order to prove the above result, we will using the following lemma in which the inequality \eqref{eq1} was given by \cite{graham2003geometric} while the inequalities \eqref{eq2}   and \eqref{eq3} were given  by \cite{MR652447}.
\begin{lemma}\cite{MR652447,graham2003geometric}\label{lem4}
	If the function $p(z)=1+\sum_{n=1}^{\infty}p_nz^n\in\mathscr{P}$, then \begin{gather}\label{eq1}
	|p_2-\frac{1}{2}p_1^2|\leq 2-\frac{1}{2}|p_1|^2,\\
	\label{eq2}|p_1^3-2 p_1p_2+p_3 |\leq 2
	\intertext{and}\label{eq3}|p_1^4+p_2^2-3p_1^2p_2+2p_1p_3-p_4|\leq 2.
	\end{gather}
\end{lemma}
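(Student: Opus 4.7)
The plan is to prove all three inequalities uniformly, by invoking the Carath\'eodory--Toeplitz parametric representation of coefficients in $\mathscr{P}$. By the rotation $p(z)\mapsto p(\lambda z)$ with $|\lambda|=1$ (which preserves $|p_n|$ for every $n$ and leaves each of the three expressions invariant up to a unimodular factor) one may assume without loss of generality that $p_1 = c \in [0,2]$. The standard Libera--Zlotkiewicz formulas then supply auxiliary parameters $x,y,\zeta\in\overline{\mathbb{D}}$ with
\begin{align*}
2p_2 &= c^2 + (4-c^2)x,\\
4p_3 &= c^3 + 2c(4-c^2)x - c(4-c^2)x^2 + 2(4-c^2)(1-|x|^2)y,
\end{align*}
together with an analogous (longer) expression for $p_4$ involving the additional parameter $\zeta$.

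For \eqref{eq1}, a direct substitution gives $p_2 - \tfrac{1}{2}p_1^2 = \tfrac{1}{2}(4-c^2)x$, whence the inequality $|p_2-\tfrac{1}{2}p_1^2|\leq\tfrac{1}{2}(4-c^2) = 2-\tfrac{1}{2}|p_1|^2$ is immediate from $|x|\leq 1$. For \eqref{eq2}, substituting the parametric formulas for $p_2$ and $p_3$ and using the identity $2x+x^2 = (1+x)^2-1$ produces the compact form
\[
p_1^3 - 2 p_1 p_2 + p_3 \;=\; c\Bigl[1 - \tfrac{(4-c^2)(1+x)^2}{4}\Bigr] + \tfrac{1}{2}(4-c^2)(1-|x|^2)\,y.
\]
Applying the triangle inequality on $y$ reduces the task to maximising an explicit elementary function of $(c,x)\in[0,2]\times\overline{\mathbb{D}}$. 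Writing $x = r e^{i\theta}$ and using $(1+e^{i\theta})^2 = 2(1+\cos\theta)e^{i\theta}$, the maximum over $\theta$, then over $r \in [0,1]$, and finally over $c \in [0,2]$ yields the sharp bound $2$, attained at $p(z) = (1+z)/(1-z)$.

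The proof of \eqref{eq3} follows the same template: substitute the parametric formulas for $p_2, p_3, p_4$ into $p_1^4 + p_2^2 - 3p_1^2 p_2 + 2p_1 p_3 - p_4$, apply the triangle inequality first on $\zeta$ (collecting the $(4-c^2)(1-|x|^2)(1-|y|^2)\zeta$ contribution) and then on $y$, and finally maximise the resulting function of $(c,r)$ over $[0,2]\times[0,1]$, where $r=|x|$. The algebraic simplifications are routine; the main obstacle is precisely this final bivariate maximisation in \eqref{eq3}, where the candidate function has several interior critical points and boundary cases that must be handled with care to extract the sharp constant $2$, which is again attained at $p(z) = (1+z)/(1-z)$.
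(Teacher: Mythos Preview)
The paper does not supply its own proof of this lemma; it is quoted verbatim from the literature, with \eqref{eq1} attributed to Graham--Kohr and \eqref{eq2}--\eqref{eq3} to Libera--Z\l otkiewicz. Your proposal is therefore not competing with a proof in the paper but rather reconstructing the original arguments from those references, and indeed the parametric formulas you invoke (Lemma~\ref{l2} in the paper) are precisely the Libera--Z\l otkiewicz representations used in \cite{MR652447,MR681830}. Your treatment of \eqref{eq1} is complete and correct, and the compact form you derive for \eqref{eq2} is accurate.

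One small caution: in the step for \eqref{eq2} you write $x=re^{i\theta}$ and then invoke the identity $(1+e^{i\theta})^2=2(1+\cos\theta)e^{i\theta}$, which only applies on the boundary $r=1$; the interior case $r<1$ still needs to be handled (or argued away by a maximum-modulus type observation). For \eqref{eq3} you are candid that the final bivariate maximisation is only sketched; this is exactly where the work lies in the original Libera--Z\l otkiewicz paper, and ``routine'' somewhat undersells it. None of this is a genuine gap in strategy---the method is the right one and is the one used in the cited sources---but as written the argument for \eqref{eq2} and \eqref{eq3} is an outline rather than a proof.
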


\begin{proof}[Proof of Theorem \ref{thm2}]
	Since the function $f\in\mathscr{S}^*_e$, we have $B_1=1$, $B_2=1/2$, $B_3=1/6$ and $B_4=1/24$.
	Using the values of $B_1$ and $B_2$ in Theorem \ref{thm1}, the bounds for $A_2$ and $A_3$ are obtained.
 To obtain the upper bound on the fourth inverse coefficient $A_4$,	we use the values of $B_i$'s in equation \eqref{A4} to obtain
	\begin{align*}
	|A_4|&=\frac{1}{24}\left|4c_3-14c_1c_2+\frac{67}{6}c_1^3 \right| \\ &=\frac{1}{24} \left|4(c_3-2c_1c_2+c_1^3)-6 c_1c_2+\frac{43}{6}c_1^3 \right|\\
&	\leq \frac{1}{24}\left[4|c_3-2c_1c_2+c_1^3|+6|c_1|\left|c_2-\frac{43}{36}c_1^2\right| \right]
	\end{align*} where the last step follows from the triangle inequality. Using Lemma  \ref{lem4} and Lemma \ref{l1} with $\nu=43/18$, we have
\begin{align*}
|A_4|\leq \frac{1}{24}\left[8+12\cdot 2\left|\frac{43}{18}-1\right| \right]=\frac{31}{18}.
\end{align*} From equation \eqref{A5}, we get
\begin{align}
|A_5|& =\frac{1}{2304}\left|1261c_1^4-2496c_1^2c_2+432c_2^2+1104c_1c_3-288c_4\right|\notag\\
&= \frac{1}{8}\left|\frac{1261}{288}c_1^4-\frac{26}{3}c_1^2c_2+\frac{3}{2}c_2^2+\frac{23}{6}c_1c_3-c_4 \right|\notag\\
&=\frac{1}{8}\left|B+\frac{11}{6}c_1 C +\frac{5}{4}c_1^2D-\frac{1}{2}DE+\frac{13}{288}c_1^4 \right|\label{eq4}
\end{align}
where \begin{align*}
B&=c_1^4+c_2^2-3c_1^2c_2+2c_1c_3-c_4,\\
C&=c_1^3-2c_1c_2+c_3,\\
D&=c_1^2-c_2
\intertext{and}
 E&=c_2-\frac{1}{2}c_1^2.
\end{align*}
Lemma \ref{l1} and Lemma \ref{lem4} readily show that \[|B|\leq 2, \quad |C|\leq2, \quad |D|\leq2 \quad\text{and}\quad |E|\leq2-\frac{1}{2}|c_1|^2.  \]
Using triangle inequality in \eqref{eq4} along with the above inequalities and the fact that $|c_1|\leq2$, we obtain
\begin{align*}
|A_5|& \leq \frac{1}{8}\left[|B|+\frac{11}{6}|c_1||C|+\frac{5}{4}|c_1|^2|D|+\frac{1}{2}|D||E|+\frac{13}{288}|c_1|^4  \right]\\
& \leq \frac{1}{8}\left[4+\frac{11}{3}|c_1|+2|c_1|^2+\frac{13}{288}|c_1|^4 \right]\leq\frac{361}{144}.
\end{align*}
Let the function $f_0\colon\mathbb{D}\to\mathbb{C}$ be defined by \[f_0(z)=z\exp\left(\int_0^z\frac{e^{\epsilon t}-1}{t}dt\right)=z+\epsilon z^2+\frac{3\epsilon^2}{4}z^3+\frac{17\epsilon^3}{36}z^4+\frac{19\epsilon^4}{72}z^5+\cdots \] where $|\epsilon|=1$. Then $f_0(0)=f'_0(0)-1=0$ and $zf'_0(z)/f_0(z)=e^{\epsilon z}$. Therefore the function $f_0\in\mathscr{S}^*_e$ and we have
\[f_0^{-1}(\omega)=\omega-\epsilon \omega^2+\frac{5\epsilon^2}{4}\omega^3-\frac{31\epsilon^3}{18}\omega^4+\frac{361\epsilon^4}{144}\omega^5+\cdots. \]
 Hence all the  bounds estimated above are sharp for the function $f_0$.
\end{proof}
In the last result of this section, we determine the first five initial coefficients of the inverse function of the functions belonging to the subclass $\SC$.
\begin{theorem}\label{thm3}
	Let the function $f\in\SC$ and $f^{-1}(\omega)=\omega+\sum_{n=2}^{\infty}A_n\omega^n$ for all $\omega$ in some neighbourhood of the origin. Then
	\begin{equation*}
	|A_n|\leq \frac{\sqrt{2}-1}{n-1}\qquad (\text{for }n=2,3,4) \quad\text{and}\quad |A_5|\leq \frac{69}{\sqrt{2}}-\frac{387}{8}.
	\end{equation*}
	First three estimated bounds  are sharp.
\end{theorem}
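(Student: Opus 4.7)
The plan is to substitute the specific values $B_1 = 1/k$ and $B_n = 2/k^n$ for $n \geq 2$ (with $k = \sqrt{2}+1$) into the formulas \eqref{A2}--\eqref{A5} and estimate each expression; the identity $k^2 = 2k+1$, satisfied by $k$, produces substantial simplification throughout. The bounds $|A_2| \leq \sqrt{2}-1$ and $|A_3| \leq (\sqrt{2}-1)/2$ are immediate from Theorem \ref{thm1}: because $|3B_1^2 - B_2| = 1/k^2 < 1/k = B_1$, the maximum in Theorem \ref{thm1} equals $1$.

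The heart of the argument is the sharp bound $|A_4| \leq (\sqrt{2}-1)/3$. Using $k^2 = 2k+1$, I expect \eqref{A4} to collapse to the clean form
\[
24 A_4 \;=\; -\frac{4}{k}\left[\, c_3 - \sqrt{2}\, c_1 c_2 + \frac{6\sqrt{2}-7}{4}\, c_1^3 \,\right],
\]
so the claim reduces to the Carath\'eodory-class inequality $\bigl|\, c_3 - \sqrt{2}\, c_1 c_2 + \tfrac{6\sqrt{2}-7}{4}\, c_1^3 \,\bigr| \leq 2$ for every $p \in \mathscr{P}$. A naive split via Lemma \ref{lem4} and the triangle inequality overshoots the target by a large factor, so one has to unpack more structure. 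I would use the Libera--Zlotkiewicz parameterization $2 c_2 = c_1^2 + (4 - c_1^2) x$ and $4 c_3 = c_1^3 + 2(4 - c_1^2) c_1 x - (4 - c_1^2) c_1 x^2 + 2(4 - c_1^2)(1 - |x|^2) y$ with $|x|, |y| \leq 1$; after a rotation placing $c_1$ in $[0, 2]$, the triangle inequality in $y$ reduces the problem to maximising an explicit function $F(t, r)$ in $t = c_1 \in [0, 2]$ and $r = |x| \in [0, 1]$. A short calculus computation (interior critical point $r^* = (\sqrt{2}-1)\,t/(2-t)$, then a monotonicity check on $t \geq \sqrt{2}$) confirms $F \leq 2$, with equality forced only at $c_1 = c_2 = 0$, $|c_3| = 2$.

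For $|A_5|$, I would substitute $B_1, \dots, B_4$ into \eqref{A5} and follow the decomposition strategy of the proof of Theorem \ref{thm2}: rewrite the resulting combination of $c_1^4, c_1^2 c_2, c_2^2, c_1 c_3, c_4$ as a linear combination of the nice quantities $B = c_1^4 + c_2^2 - 3 c_1^2 c_2 + 2 c_1 c_3 - c_4$, $C = c_1^3 - 2 c_1 c_2 + c_3$, $D = c_1^2 - c_2$, and $E = c_2 - c_1^2/2$, plus a residual $c_1^4$ term. Invoking the bounds $|B|, |C|, |D| \leq 2$ and $|E| \leq 2 - |c_1|^2/2$ provided by Lemmas \ref{l1} and \ref{lem4}, together with $|c_1| \leq 2$ and the triangle inequality, then yields the stated (non-sharp) numerical bound $69/\sqrt{2} - 387/8$; no further optimization of the decomposition is needed since sharpness is not claimed in this case.

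Sharpness for $n = 2, 3, 4$ is exhibited by the function $f_n$ defined by $z f_n'(z)/f_n(z) = \varphi_{\mathcal{R}}(\epsilon z^{n-1})$ with $|\epsilon| = 1$: a short power-series calculation yields $a_j = 0$ for $2 \leq j < n$ and $a_n = \epsilon/(k(n-1))$, so the inverse-coefficient formulas collapse to $A_n = -a_n$, giving $|A_n| = (\sqrt{2}-1)/(n-1)$. The chief obstacle is the $A_4$ step: extracting the sharp constant $(\sqrt{2}-1)/3$ really does require the full Libera--Zlotkiewicz parameterization (or an equivalent Prokhorov--Szynal estimate for $|c_3 + \alpha c_1 c_2 + \beta c_1^3|$), not the quick triangle-inequality split that sufficed for the class $\mathscr{S}^*_e$ in Theorem \ref{thm2}.
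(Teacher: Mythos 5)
Your proposal is correct and follows essentially the same route as the paper: $|A_2|,|A_3|$ from Theorem \ref{thm1}, the reduction of $24A_4$ to $-\tfrac{4}{k}\bigl(c_3-\sqrt{2}\,c_1c_2+\tfrac{6\sqrt{2}-7}{4}c_1^3\bigr)$, the $B,C,D,E$ decomposition for $A_5$, and the extremal functions $zf'(z)/f(z)=\varphi_{\mathcal{R}}(\epsilon z^{n-1})$. The only cosmetic difference is at the $A_4$ step, where the paper simply checks that $\beta=(1+k)/2k$ and $\alpha=(-2+2k+k^2)/4k^2$ satisfy the hypotheses of the Prokhorov--Szynal-type estimate quoted as \cite[Lemma 3, p.~66]{MR2055766}, whereas you propose to re-derive that inequality from the Libera--Z{\l}otkiewicz parameterization --- which is exactly how that lemma is proved, so the two are interchangeable.
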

\begin{proof}
Since the function $f\in\SC$, we get $B_1=1/k$, $B_2=2/k^2$, $B_3=2/k^3$ and $B_4=2/k^4$ where $k=\sqrt{2}+1$.  Therefore  Theorem \ref{thm1} yield the desired upper bounds for $A_2$ and $A_3$.
Now substituting the values of $B_i$'s in equation \eqref{A4}, we obtain
	\begin{align*}
	|A_4|&=\frac{1}{24 k^3}\left|4k^2c_3-4k(1+k) c_1c_2+(-2+2k+k^2)c_1^3 \right| \\ &=\frac{1}{6k} \left|c_3-\frac{1}{k}(1+k) c_1c_2+\left(\frac{-2+2k+k^2}{4k^2}\right)c_1^3 \right|.\end{align*}
	Then $\beta=(1+k)/2k$ and $\alpha=(-2+2k+k^2)/4k^2$ satisfy the hypothesis of \cite[Lemma 3, p.~66]{MR2055766}. Consequently, we have the desired upper bound for $A_4$. From equation \eqref{A5}, we have
	\begin{align*}
	|A_5|& =\frac{1}{8k}\left|\left(\frac{-47-34k+19k^2+6k^3}{48k^3}\right)c_1^4-\left(\frac{-17+19k+9k^2}{12k^2}\right)c_1^2c_2+ \left(\frac{1+2k}{3k}\right)c_2^2\right.\\
	& \quad +\left.\left(\frac{4+3k}{4k}\right)c_1c_3-c_4\right|\\
	&=\frac{1}{8k}\left|B+\left(\frac{4-3k}{3k}\right)c_1 C +\left(\frac{-34-35k+12k^2}{24k^2}\right)c_1^2D-\left(\frac{1-2k}{4k}\right)DE \right.\\
	& \quad+\left.\left(\frac{-47+34k+19k^2-6k^3}{48k^3}\right)c_1^4 \right|
	\end{align*}where $B$, $C$, $D$ and $E$ are same as in proof of Theorem \ref{thm2}. Applying  similar technique as in  Theorem \ref{thm2}, we obtain
	\begin{align*}
	|A_5|& \leq \frac{1}{8k}\left[|B|+\left(\frac{-4+3k}{3k}\right)|c_1| |C| +\left(\frac{34+35k-12k^2}{24k^2}\right)|c_1|^2|D|+\left(\frac{-1+2k}{4k}\right)|D||E| \right.\\
	& \quad+\left.\left(\frac{-47+34k+19k^2-6k^3}{48k^3}\right)|c_1|^4 \right]\\
	& \leq \frac{1}{8k}\left[4-\frac{1}{k}+\frac{2(-4+3k)}{3k}|c_1|+\left(\frac{17+19k-9k^2}{6k^2}\right)|c_1|^2\right.\\& \quad\left. +\left(\frac{-47+34k+19k^2-6k^3}{48k^3}\right)|c_1|^4 \right]\\& \leq\frac{-47+68k+38k^2}{24k^4}=\frac{69}{\sqrt{2}}-\frac{387}{8}.
	\end{align*}
	Define the functions $f_i\colon\mathbb{D}\to\mathbb{C}$ ($i=1,2,3$) by \begin{gather}f_1(z)=z\exp\left(\int_0^z\frac{\varphi_\mathcal{R}(\epsilon t)-1}{t}dt\right)=z+\frac{\epsilon}{k} z^2+\frac{3\epsilon^2}{2k^2}z^3+\frac{11\epsilon^3}{6k^3}z^4+\frac{53\epsilon^4}{24k^4}z^5+\cdots,\notag
	\\
	f_2(z)=z\exp\left(\int_0^z\frac{\varphi_\mathcal{R}(\epsilon t^2)-1}{t}dt\right)=z+\frac{\epsilon^2}{2k}z^3+\frac{5\epsilon^4}{8k^2}z^5+\cdots\label{ff2}
	\intertext{and}
	f_3(z)=z\exp\left(\int_0^z\frac{\varphi_\mathcal{R}(\epsilon t^3)-1}{t}dt\right)=z+\frac{\epsilon^3}{3k}z^4+\frac{\epsilon^6}{18k^2}z^7+\cdots\notag \end{gather}where $|\epsilon|=1$ and $\varphi_{\mathcal{R}}$ is given by \eqref{rf}. Clearly,  the functions $f_i\in\SC$ for $i=1,2,3$ and we have
	\begin{gather*}f_1^{-1}(\omega)=\omega-\frac{\epsilon}{k} \omega^2+\frac{\epsilon^2}{2k}\omega^3+\frac{2\epsilon^3}{3k^3}\omega^4-\frac{47\epsilon^4}{24 k^4}+\cdots,
	\\
	f_2^{-1}(\omega)=\omega-\frac{\epsilon^2}{2k}\omega^3+\frac{\epsilon^4}{8k^2}\omega^5+\cdots
	\intertext{and}
	f_3^{-1}(\omega)=\omega-\frac{\epsilon^3}{3k}\omega^4+\cdots. \end{gather*}
	Hence the bounds of $A_2$, $A_3$ and $A_4$ are sharp for the functions $f_1$, $f_2$ and $f_3$ respectively.
\end{proof}
\section{Hankel Determinants}\label{Hankel}
We begin this section by determining the upper bound on the second Hankel determinant involving the coefficients of the inverse function of the function in the class $\mathscr{S}^*(\varphi)$.
\begin{theorem}\label{Thm1}
	Let the function $f\in\mathscr{S}^*(\varphi)$ and $f^{-1}(\omega)=\omega+\sum_{n=2}^{\infty}A_n\omega^n$ for all $\omega$ in some neighbourhood of the origin.
	\begin{enumerate}[1.]
		\item If $B_1$, $B_2$ and $B_3$ satisfy the conditions \[|3B_1^2-B_2|\leq B_1,\qquad\left|5B_1^4-6B_1^2B_2-3B_2^2+4B_1B_3 \right|-3B_1^2\leq0 \]
		then \[ |A_2A_4-A_3^2|\leq \frac{B_1^2}{4}.\]
		\item If $B_1$, $B_2$ and $B_3$ satisfy the conditions \[|3B_1^2-B_2|\geq B_1,\qquad\left|5B_1^4-6B_1^2B_2-3B_2^2+4B_1B_3 \right|-B_1\left|3B_1^2-B_2\right|-2B_1^2\geq0  \]
		or the conditions
		\[|3B_1^2-B_2|\leq B_1,\qquad\left|5B_1^4-6B_1^2B_2-3B_2^2+4B_1B_3 \right|-3B_1^2\geq0 \]
		then\[ |A_2A_4-A_3^2|\leq \frac{1}{12}\left|5B_1^4-6B_1^2B_2-3B_2^2+4B_1B_3 \right|.\]
		\item If $B_1$, $B_2$ and $B_3$ satisfy the conditions \[|3B_1^2-B_2|> B_1,\qquad\left|5B_1^4-6B_1^2B_2-3B_2^2+4B_1B_3 \right|-B_1\left|3B_1^2-B_2\right|-2B_1^2\leq0  \]
		then \[ |A_2A_4-A_3^2|\leq \frac{B_1^2}{12}\left(\frac{\splitfrac{3\left|5B_1^4-6B_1^2B_2-3B_2^2+4B_1B_3 \right|}	{-4B_1\left|3B_1^2-B_2\right|-(3B_1^2-B_2)^2-4B_1^2}}{\left|5B_1^4-6B_1^2B_2-3B_2^2+4B_1B_3 \right|	-2B_1\left|3B_1^2-B_2\right|-B_1^2}\right).\]
	\end{enumerate}
		
\end{theorem}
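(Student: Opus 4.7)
The plan is to insert the expressions \eqref{A2}, \eqref{A3}, \eqref{A4} for $A_2$, $A_3$, $A_4$ into the functional $A_2A_4 - A_3^2$ and bound the resulting polynomial in the Carath\'eodory coefficients $c_1,c_2,c_3$ by means of the Libera--Zlotkiewicz parametrization of $\mathscr{P}$. Since $|A_2A_4 - A_3^2|$ is invariant under the rotation $p(z)\mapsto p(e^{i\theta}z)$ (equivalently $c_n\mapsto e^{in\theta}c_n$), we may assume $c_1 = c\in[0,2]$. With this normalization,
\begin{align*}
2c_2 &= c^2 + (4-c^2)\,x,\\
4c_3 &= c^3 + 2c(4-c^2)\,x - c(4-c^2)\,x^2 + 2(4-c^2)(1-|x|^2)\,\zeta,
\end{align*}
for some $x,\zeta$ in the closed unit disk. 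Because $\zeta$ enters the resulting expression linearly, the triangle inequality together with the choice $|\zeta|=1$ reduces the task to maximizing a function $F(c,y)$ on the rectangle $[0,2]\times[0,1]$, where $y:=|x|$.

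Next I would analyze $F$, which after simplification has the form $\alpha(c) + \beta(c)\,y + \gamma(c)\,y^2$, with $\alpha$, $\beta$, $\gamma$ polynomials in $c$ whose coefficients depend on $B_1$, $B_2$, $B_3$. For each fixed $c$, inspecting $\gamma(c)$ and $\partial F/\partial y$ decides whether the maximum in $y$ is attained at $y=0$, at $y=1$, or at an interior stationary point. Substituting the optimal $y^{*}(c)$ back into $F$ produces a function of $c$ alone, which is then maximized on $[0,2]$; the optimum is typically attained at $c=0$, at $c=2$, or at an interior critical point. The three cases of the theorem correspond precisely to the three such regimes, and the bounds $B_1^2/4$, $|5B_1^4 - 6B_1^2B_2 - 3B_2^2 + 4B_1B_3|/12$, and the rational expression of case~(3) arise by direct substitution once the maximizer has been identified. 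Heuristically, case (1) corresponds to $c=0$ (where $A_2A_4$ vanishes and only $A_3^2$ survives), case (2) to an extremal choice with $y=1$, and case (3) to an interior critical point in $y$.

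The principal obstacle is the sign- and monotonicity-bookkeeping required to pin down the maximizer in each parameter regime. The two composite quantities $3B_1^2 - B_2$ and $5B_1^4 - 6B_1^2B_2 - 3B_2^2 + 4B_1B_3$ control exactly which of the contributions from $\alpha, \beta, \gamma$ dominates, and the precise inequalities in the statement mark the transitions between regimes: the condition $|3B_1^2-B_2|\lessgtr B_1$ governs the behaviour of the $y$-quadratic at $y=0$, while the second condition distinguishes whether the reduced $c$-function is monotone on $[0,2]$ or admits an interior maximum. The delicate step is to verify that no competing corner or critical point of $F(c,y)$ exceeds the asserted maximum under each hypothesis; once this is done the final evaluation of $F$ at the maximizer is a routine algebraic computation.
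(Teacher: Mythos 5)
Your plan is essentially the paper's proof: substitute \eqref{A2}--\eqref{A4} into $A_2A_4-A_3^2$, normalize $c_1=c\in[0,2]$ by rotation invariance, apply the Libera--Z{\l}otkiewicz parametrization together with the triangle inequality, and maximize the resulting $F(c,y)$ over the rectangle $[0,2]\times[0,1]$. One correction to your closing heuristic: here $\partial F/\partial y>0$ on the whole rectangle, so the maximum in $y$ is always attained at $y=1$, and the trichotomy of the theorem comes entirely from maximizing the quadratic $Pt^2+Qt+R$ in $t=c^2$ over $[0,4]$ (case 1 at $t=0$, case 2 at $t=4$, case 3 at the interior vertex), not from a competition between $y=1$ and an interior critical point in $y$.
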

To prove the above result, we shall use the following lemma which is given by Libera and Z{\l}otkiewicz \cite{MR681830}.
\begin{lemma}\cite{MR681830}\label{l2}
	Let the function $p\in\mathscr{P}$ and $p(z)=1+\sum_{n=1}^{\infty}p_nz^n$, then
	\begin{align*}
	2 p_2& = p_1^2+\gamma(4-p^2_1)\\
	4 p_3&=p_1^3+2p_1(4-p_1^2)\gamma-p_1(4-p_1^2)\gamma^2+2(4-p_1^2)(1-|\gamma|^2)z
	\end{align*} for some complex valued $\gamma$ and $z$ satisfying $|\gamma|\leq 1$ and $|z|\leq 1$.
\end{lemma}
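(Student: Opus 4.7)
The plan is to derive the identities by the classical route: conjugate $p$ to a Schwarz function and apply one step of Schur's algorithm. Since the class $\mathscr{P}$ is invariant under $p(z)\mapsto p(e^{-i\theta}z)$, after rotation we may assume $p_1\geq 0$; the stated identities are written with $p_1^2$ and $p_1\gamma^2$ in place of $|p_1|^2$ and $\bar p_1\gamma^2$, and these coincide under this normalization.

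\textbf{Step 1: Reduction to a Schwarz function.} Define $\omega(z):=(p(z)-1)/(p(z)+1)$. Since $p\in\mathscr{P}$, the function $\omega$ is analytic in $\mathbb{D}$, satisfies $\omega(0)=0$, and $|\omega(z)|<1$. Writing $\omega(z)=c_1z+c_2z^2+c_3z^3+\cdots$ and expanding
$$
p(z)=\frac{1+\omega(z)}{1-\omega(z)}=1+2\sum_{k\geq 1}\omega(z)^k,
$$
then matching coefficients of $z$, $z^2$, $z^3$ produces $p_1=2c_1$, $p_2=2c_2+2c_1^2$, and $p_3=2c_3+4c_1c_2+2c_1^3$.

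\textbf{Step 2: Schur parametrization of $c_2$ and $c_3$.} By the Schwarz lemma, $g(z):=\omega(z)/z$ is a holomorphic self-map of $\overline{\mathbb{D}}$ with $g(0)=c_1$. One Schur step produces
$$
h(z)\;:=\;\frac{g(z)-c_1}{1-c_1\,g(z)},
$$
again a self-map of $\overline{\mathbb{D}}$, and $h(0)=0$; hence $h(z)/z$ is once more a self-map of $\overline{\mathbb{D}}$ by Schwarz. Setting $\gamma:=h'(0)\in\overline{\mathbb{D}}$ and letting $\zeta\in\overline{\mathbb{D}}$ be the value at the origin of the next Schur iterate, a direct expansion of the quotient $h$ to second order in $z$ yields
$$
c_2=(1-c_1^2)\gamma,\qquad c_3=(1-c_1^2)\bigl[(1-|\gamma|^2)\zeta-c_1\gamma^2\bigr].
$$

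\textbf{Step 3: Assembly and main obstacle.} Substituting these together with $c_1=p_1/2$ and $1-c_1^2=(4-p_1^2)/4$ into the Step 1 formulas and simplifying gives $2p_2=p_1^2+(4-p_1^2)\gamma$ and
$$
4p_3=p_1^3+2p_1(4-p_1^2)\gamma-p_1(4-p_1^2)\gamma^2+2(4-p_1^2)(1-|\gamma|^2)\zeta,
$$
which, upon relabelling $\zeta$ as $z$, is the stated lemma. The main computational obstacle is the Schur-step expansion that produces the formula for $c_3$: one must invert the power series of $1-c_1 g(z)$ to second order and carefully collect cross-terms, where any missed conjugation or sign would propagate into the $p_3$ identity. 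The rotation normalization $p_1\geq 0$ (hence $c_1$ real) is precisely what allows $\bar c_1\gamma^2$ to be written as $c_1\gamma^2$ in the final displayed form.
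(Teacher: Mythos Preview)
The paper does not prove this lemma; it is quoted from Libera and Z\l otkiewicz \cite{MR681830} and used as a black box in the proofs of Theorems~\ref{Thm1} and~\ref{th2}. Your argument via the Carath\'eodory--Schur iteration is correct and is precisely the classical route taken in the original reference: pass from $p\in\mathscr{P}$ to the Schwarz function $\omega=(p-1)/(p+1)$, apply one Schur step to $\omega(z)/z$ to extract the parameter $\gamma$, and a second step to extract $\zeta$; the rotation normalization $p_1\ge 0$ justifies writing $c_1$ in place of $\bar c_1$ in the $c_3$ formula, exactly as you note. The only points worth tightening are the degenerate cases $|p_1|=2$ (so $c_1=\pm 1$) and $|\gamma|=1$, where the Schur step collapses; in each case maximum modulus forces the later coefficients to vanish and the identities hold trivially.
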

\begin{proof}[Proof of Theorem \ref{Thm1}]
	Using expressions \eqref{A2}, \eqref{A3} and \eqref{A4}, we have
	\begin{align*}A_2A_4-A_3^2&= \frac{B_1}{192} \bigg[\left(5B_1^3-\frac{3 B_2^2}{B_1}+4B_3+6B_1^2-2B_2+B_1-6B_1B_2 \right)c_1^4 \\ & \quad+ 4(-3B_1^2-B_1+B_2)c_1^2c_2-12B_1c_2^2 +16B_1c_1c_3 \bigg]. \end{align*}
	Let us suppose that
	\begin{align}
	d_1 & =16B_1, \quad d_2	 =4(-3B_1^2-B_1+B_2), \notag \\
	d_3 & =-12 B_1, \quad d_4  =5B_1^3-\frac{3 B_2^2}{B_1}+4B_3+6B_1^2-2B_2+B_1-6B_1B_2,\label{d}\\
	T& = \frac{B_1}{192}.\notag
	\end{align}
	This gives
	\begin{equation}
	|A_2A_4-A_3^2|=T|d_1c_1c_3	+d_2c_1^2c_2+d_3c_2^2	+d_4c_1^4|.
	\end{equation}
	Since $\mathscr{S}^*(\varphi)$ is rotationally invariant and if the function  $p\in\mathscr{P}$, then $p(e^{i\theta}z) \in\mathscr{P}$ (where $\theta$ is a real), we can always suppose that $c_1>0$ and since $|c_1|\leq 2$, without loss of generality assume that $c_1=c\in[0,2]$.
In view of Lemma \ref{l2}, we have
	\begin{align*}|A_2A_4-A_3^2| & = \frac{T}{4} \big|(d_1+2d_2+d_3+4d_4)c^4+2c^2(4-c^2)(d_1+d_2+d_3)\gamma\\ & \quad+(4-c^2)\gamma^2(-d_1c^2+d_3(4-c^2)) +2d_1c(4-c^2)(1-|\gamma|^2)z \big).
	\end{align*}
	Applying the triangle inequality in above equation,  replacing $|\gamma|$ by $\mu$ and substituting the values of $d_1$, $d_2$, $d_3$ and $d_4$ from \eqref{d}, we have
	\begin{align*}|A_2A_4-A_3^2|& \leq \frac{T}{4} \bigg[4c^4\left|5B_1^3-\frac{3 B_2^2}{B_1}+4B_3-6B_1B_2 \right| +8c^2(4-c^2) |3B_1^2-B_2|\mu \\ & \quad+ 4B_1(4-c^2)(12+c^2)\mu^2+32B_1c(4-c^2)(1-\mu^2) \bigg]\\
	& = T \bigg[c^4\left|5B_1^3-\frac{3 B_2^2}{B_1}+4B_3-6B_1B_2 \right| +2c^2(4-c^2) |3B_1^2-B_2|\mu \\ & \quad+ 8B_1c(4-c^2)+B_1(4-c^2)(c-2)(c-6)\mu^2 \bigg]=:F(c,\mu).
   \end{align*}
  	For fixed $c$, since $\partial F/\partial \mu>0$ in the region  $\Omega=\{ (c,\mu)\colon0\leq c\leq2, 0\leq\mu\leq1\}$, $F(c,\mu)$ is an increasing function of $\mu$ in the closed interval $[0,1]$ which implies  the function $F(c,\mu)$ attains its maximum value at $\mu=1$ for some fixed $c\in[0,2]$, that is,\[ \max F(c,\mu)=F(c,1)=:G(c) \] where \begin{align*}G(c)&= \frac{B_1}{192} \bigg[c^4\left|5B_1^3-\frac{3 B_2^2}{B_1}+4B_3-6B_1B_2 \right| +2c^2(4-c^2) |3B_1^2-B_2| \\ & \quad+ B_1(4-c^2)(12+c^2) \bigg]\\
  	&= \frac{B_1}{192} \bigg[c^4\left(\left|5B_1^3-\frac{3 B_2^2}{B_1}+4B_3-6B_1B_2 \right|-2|3B_1^2-B_2|-B_1 \right)\\ & \quad +8c^2\left(|3B_1^2-B_2|-B_1\right) +48B_1 \bigg].
  	\end{align*}
	Let us set
	\begin{align}
	P & =\left|5B_1^3-\frac{3 B_2^2}{B_1}+4B_3-6B_1B_2 \right|-2|3B_1^2-B_2|-B_1,\notag\\
	Q & = 8\left(|3B_1^2-B_2|-B_1\right),\label{pqr}\\
	R &=48B_1.\notag
	\end{align}
Since
\begin{equation*}
\max_{0\leq t	\leq4}(Pt^2+Qt+R)=\begin{cases}
R, \quad & Q\leq0, P\leq-Q/4\\
16P+4Q+R, \quad & Q\geq0, P\geq-Q/8 \text{ or } Q\leq0, P\geq -Q/4\\
(4PR-Q^2)/4P, \quad & Q>0, P\leq-Q/8
\end{cases}
\end{equation*}
we have
\begin{equation*}
|A_2A_4-A_3^2|\leq\frac{B_1}{192}\begin{cases}
R, \quad & Q\leq0, P\leq-Q/4\\
16P+4Q+R, \quad & Q\geq0, P\geq-Q/8 \text{ or } Q\leq0, P\geq -Q/4\\
(4PR-Q^2)/4P, \quad & Q>0, P\leq-Q/8
\end{cases}
\end{equation*}
where $P$, $Q$ are $R$ are same is in \eqref{pqr}. This completes the proof.
\end{proof}
As a consequence of Theorem \ref{Thm1}, we have the following:
\begin{corollary}
	\begin{enumerate}[1.]
	\item If the function $f\in\mathscr{S}^*$, then $|A_2A_4-A_3^2|\leq 3$.
	\item If the function $f\in\mathscr{S}^*_L$, then $|A_2A_4-A_3^2|\leq 19/280$.
	\item If the function $f\in\mathscr{S}^*_e$, then $|A_2A_4-A_3^2|\leq 29/98$.
	\item If the function $f\in\SC$, then $|A_2A_4-A_3^2|\leq 1/4k^2$.
	\end{enumerate}
\end{corollary}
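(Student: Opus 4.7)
The plan is to treat each of the four cases as a direct specialization of Theorem \ref{Thm1}. For each class I would read off the first three Taylor coefficients $B_1, B_2, B_3$ of the associated Ma--Minda function $\varphi$, check which of the three cases in Theorem \ref{Thm1} the data fall into, and then substitute into the corresponding bound formula.

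Concretely, for $\mathscr{S}^*$ I take $\varphi(z)=(1+z)/(1-z)$, giving $B_1=B_2=B_3=2$. Here $|3B_1^2-B_2|=10>B_1$ and $|5B_1^4-6B_1^2B_2-3B_2^2+4B_1B_3|=36$, and a quick check shows $36-B_1|3B_1^2-B_2|-2B_1^2=8\geq 0$, so part (2) of Theorem \ref{Thm1} applies and gives $36/12=3$. For $\SC$ I use $B_1=1/k,\ B_2=2/k^2,\ B_3=2/k^3$ from \eqref{rf}; then $|3B_1^2-B_2|=1/k^2\leq 1/k=B_1$, and $|5B_1^4-6B_1^2B_2-3B_2^2+4B_1B_3|=11/k^4$, which is $\leq 3B_1^2=3/k^2$ since $11\leq 3k^2=9+6\sqrt{2}$, so part (1) applies and yields $B_1^2/4=1/(4k^2)$.

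The two remaining cases fall under part (3). For $\mathscr{S}^*_L$ with $B_1=1/2,\ B_2=-1/8,\ B_3=1/16$, one computes $|3B_1^2-B_2|=7/8>B_1$ and $|5B_1^4-6B_1^2B_2-3B_2^2+4B_1B_3|=37/64$, and the inequality $37/64-(1/2)(7/8)-2(1/2)^2=-23/64\leq 0$ puts us in case (3); plugging into the formula gives $(B_1^2/12)\cdot(114/35)=19/280$. For $\mathscr{S}^*_e$ with $B_1=1,\ B_2=1/2,\ B_3=1/6$, the quantity $|5B_1^4-6B_1^2B_2-3B_2^2+4B_1B_3|=23/12$ and $|3B_1^2-B_2|=5/2>B_1$, and again the second inequality of case (3) holds, yielding the bound $(1/12)\cdot(174/49)=29/98$.

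The only real obstacle is the bookkeeping: each case requires determining the correct sign of $P$ and $Q$ (in the notation of the theorem's proof) before the right branch of the piecewise bound can be applied, and case (3) involves a rational expression whose simplification must be done carefully with $k=\sqrt{2}+1$ and $k^2=3+2\sqrt{2}$ to land on the clean closed forms $19/280$, $29/98$, and $1/(4k^2)$. No new ideas are needed beyond Theorem \ref{Thm1} itself.
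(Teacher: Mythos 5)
Your proposal is correct and is exactly the route the paper intends: the corollary is stated as a direct consequence of Theorem \ref{Thm1}, obtained by substituting the coefficients $B_1$, $B_2$, $B_3$ of each $\varphi$ and selecting the appropriate branch. I verified your case selections and arithmetic (including the values $36$, $37/64$, $23/12$, $11/k^4$ and the resulting bounds $3$, $19/280$, $29/98$, $1/(4k^2)$), and all check out; you also correctly use $B_2=2/k^2$, $B_3=2/k^3$ for $\SC$ as in \eqref{rf}.
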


Now let the function $f\in\SC$, then $B_1=1/k$, $B_2=2/k$, $B_3=2/k^2$. Using the values of $B_i$'s in  \eqref{f5}, \eqref{f6} and \eqref{f8},  we get
\begin{align}
a_2&=\frac{1}{2k}c_1 \label{f2}\\
a_3&= \frac{1}{8k^2}(2kc_2+(3-k)c_1^2) \label{f3}\\
a_4&=\frac{1}{48k^3}\left((11-11k+2k^2)c_1^3+2(11-4k)kc_1c_2+8k^2c_3\right). \label{f4}
\end{align}
The following estimate of the Fekete-Szeg\"o functional for $f\in\SC$ is a direct consequence of  Lemma \ref{l1}:
 \begin{equation}
|a_3-\mu a_2^2|\leq \frac{1}{2k} \max\left\{1, \frac{1}{k} |2\mu-3|\right\}, \qquad k=\sqrt{2}+1\label{cor1} \end{equation}
 and in particular, we have \[|a_2|\leq \frac{1}{k} \quad \text{and}\quad |a_3|\leq \frac{3}{2k^2}. \]
The extremal function  $h$ for the class $\SC$ is given by
\begin{align}
h(z) & := \frac{k^2 z}{(k-z)^2}e^{-z/k} \notag \\
& = z+\frac{1}{k}z^2 +\frac{3}{2 k^2}z^3+\frac{11}{6k^3}z^4+
\cdots.
\end{align}
Hence we conclude the following:
\begin{conjecture}\label{conj1}
Let $f\in\SC$ and $f(z)=z+\sum_{n=2}^{\infty}a_nz^n$, then \[|a_n|\leq \frac{1}{k^{n-1}}\left(\sum_{p=0}^{n-1}(-1)^p \frac{n-p}{p!}\right).\]
\end{conjecture}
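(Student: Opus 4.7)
The plan is to proceed by induction on $n$, leveraging the defining subordination $zf'(z)/f(z)\prec\varphi_{\mathcal{R}}(z)$. Writing $zf'(z)/f(z) = 1 + \sum_{j\geq 1} b_j z^j$ and matching Taylor coefficients in $zf'(z) = f(z)\bigl(1 + \sum_{j\geq 1}b_j z^j\bigr)$ yields the familiar recursion
\begin{equation*}
(n-1)a_n \;=\; \sum_{j=1}^{n-1} b_j\, a_{n-j}, \qquad a_1 = 1.
\end{equation*}
Before running the induction I would identify the right-hand side of the conjectured bound as the $n$-th Taylor coefficient of the candidate extremal function $h(z) = k^2 z(k-z)^{-2}e^{-z/k}$: the Cauchy product of $(1-z/k)^{-2}=\sum_{m\geq 0}(m+1)(z/k)^m$ with $e^{-z/k}=\sum_{p\geq 0}(-z/k)^p/p!$, multiplied by $z$, produces precisely $k^{-(n-1)}\sum_{p=0}^{n-1}(-1)^p(n-p)/p!$. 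Since $zh'(z)/h(z) = \varphi_{\mathcal{R}}(z)$, these coefficients $a_n^*$ satisfy the same recurrence with $b_j$ replaced by $B_1=1/k$ and $B_j=2/k^j$ ($j\geq 2$).

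Under the inductive hypothesis $|a_m|\leq a_m^*$ for every $m<n$, the triangle inequality applied to the recurrence yields $(n-1)|a_n|\leq\sum_{j=1}^{n-1}|b_j|\,a_{n-j}^*$, so the induction would close once one established the weighted majorization
\begin{equation*}
\sum_{j=1}^{n-1}|b_j|\,a_{n-j}^* \;\leq\; \sum_{j=1}^{n-1}B_j\,a_{n-j}^*.
\end{equation*}
This reduces the whole statement to an aggregated sharp coefficient estimate for analytic functions subordinate to $\varphi_{\mathcal{R}}$.

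This reduction exposes the principal obstacle. Because $\varphi_{\mathcal{R}}$ is starlike with respect to $1$ but not convex, Rogosinski's theorem delivers only the uniform bound $|b_j|\leq B_1=1/k$, which is too weak since $B_j/B_1 = 2/k^{j-1}<1$ for $j\geq 2$; moreover, the pointwise inequality $|b_j|\leq B_j$ itself fails in general (taking $p(z) = \varphi_{\mathcal{R}}(z^2)$ yields $|b_2| = B_1>B_2$), so the majorization must be handled as a genuinely aggregated inequality. A further, and I expect decisive, subtlety is that $h=f_1$ is not the pointwise extremal for every $n$: a direct computation using the functions $f_2$, $f_3$ defined in \eqref{ff2} shows that $|a_4(f_3)| = 1/(3k)$ exceeds $a_4^*=11/(6k^3)$ and $|a_5(f_2)| = 5/(8k^2)$ exceeds $a_5^*=53/(24k^4)$ once $k=\sqrt{2}+1$ is substituted, so the correct sharp extremiser depends on $n$. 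A serious attack must therefore first use the radial Loewner parametric method, in the spirit of the analyses of $\mathscr{S}^*_L$ and $\mathscr{S}^*[A,B]$, to identify for each $n$ the Loewner trajectory realising $\sup_{f\in\SC}|a_n|$, and only afterwards invoke the recurrence-based induction above to prove the corresponding (possibly $n$-dependent) sharp bound.
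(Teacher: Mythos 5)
This statement is presented in the paper as a conjecture: the authors give no proof, only the motivation that the right-hand side is the $n$-th Taylor coefficient of $h(z)=k^2z(k-z)^{-2}e^{-z/k}$, the solution of $zh'(z)/h(z)=\varphi_{\mathcal{R}}(z)$. There is therefore no argument of theirs to compare yours against. Your proposal, as you yourself acknowledge, is not a proof either: the induction through $(n-1)a_n=\sum_{j=1}^{n-1}b_ja_{n-j}$ would close only under the aggregated majorization $\sum_{j=1}^{n-1}|b_j|\,a_{n-j}^*\le\sum_{j=1}^{n-1}B_j\,a_{n-j}^*$, and you correctly observe that this is unavailable because $B_j=2/k^j<B_1=1/k$ for $j\ge2$ while $|b_j|$ can equal $B_1$ (e.g.\ for $p=\varphi_{\mathcal{R}}(z^2)$). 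The concluding appeal to the Loewner parametric method is a research programme, not an argument.

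The real content of your write-up is the pair of computations you present as a ``subtlety'': they refute the conjecture outright, using functions the paper itself exhibits. The function $f_3$ from the proof of Theorem \ref{thm3}, with $zf_3'(z)/f_3(z)=\varphi_{\mathcal{R}}(\epsilon z^3)$, lies in $\SC$ and has $|a_4|=1/(3k)=2k^2/(6k^3)$; since $2k^2=6+4\sqrt{2}>11$, this exceeds the conjectured bound $11/(6k^3)$. Similarly $f_2$ of \eqref{ff2} has $|a_5|=5/(8k^2)=15k^2/(24k^4)>53/(24k^4)$. I have checked both computations and they are correct. So the proper conclusion is not that ``the extremiser depends on $n$'' and a finer method is needed to establish the stated bound; it is that Conjecture \ref{conj1} is false for $n=4$ (and $n=5$), and any corrected version must be at least $\max\{a_n^*,\,1/((n-1)k)\}$, where $a_n^*$ denotes the conjectured quantity. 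You should present this as a disproof rather than bury it inside a failed proof strategy; as written, your text leaves the reader with the impression that the statement might still be salvageable by a cleverer induction, when in fact no proof of it can exist.
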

Following result can be easily obtained by putting the values of $B_i$'s mentioned above in \cite[Theorem 1. p.~3]{MR3073988}:
\begin{theorem}\label{th3}
	Let $f\in\SC$ and $f(z)=z+\sum_{n=2}^{\infty}a_nz^n$, then \[|a_2a_4-a_3^2|\leq \frac{1}{4k^2}\approx 0.0428932. \] The bound obtained is sharp for the function $f_2$ defined by \eqref{ff2}.
\end{theorem}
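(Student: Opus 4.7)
The plan is to derive Theorem \ref{th3} as a direct specialization of the sharp bound on the second Hankel determinant for the Ma-Minda class $\mathscr{S}^*(\varphi)$ given in \cite{MR3073988}. That master result expresses the bound on $|a_2a_4 - a_3^2|$ as a piecewise function of the Taylor coefficients $B_1, B_2, B_3$ of $\varphi$, branching according to the signs of certain combinations of the $B_i$'s (in spirit very close to the three cases of Theorem \ref{Thm1} that we already proved for the inverse coefficients). All that is needed, then, is to identify which branch applies for $\varphi = \varphi_\mathcal{R}$ and to exhibit an extremal function.

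First I would read off the coefficients of $\varphi_\mathcal{R}$ directly from the series \eqref{rf}, obtaining $B_1 = 1/k$, $B_2 = 2/k^2$, $B_3 = 2/k^3$ with $k = \sqrt{2}+1$, and then compute the two auxiliary quantities that select the branch, namely
\begin{equation*}
3B_1^2 - B_2 = \frac{3}{k^2} - \frac{2}{k^2} = \frac{1}{k^2}, \qquad 5B_1^4 - 6B_1^2B_2 - 3B_2^2 + 4B_1B_3 = \frac{5 - 12 - 12 + 8}{k^4} = -\frac{11}{k^4}.
\end{equation*}
Using $k > 1$ and $k^2 = 3 + 2\sqrt{2}$, one has $|3B_1^2 - B_2| = 1/k^2 < 1/k = B_1$ and $|5B_1^4 - 6B_1^2B_2 - 3B_2^2 + 4B_1B_3| - 3B_1^2 = (11 - 3k^2)/k^4 < 0$, the latter because $3k^2 = 9 + 6\sqrt{2} > 11$. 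These two inequalities place us in the ``simplest'' branch of the master bound, where the estimate collapses to $B_1^2/4 = 1/(4k^2)$.

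For sharpness, I would evaluate the functional on the extremal function $f_2$ from \eqref{ff2}: since $a_2 = 0$, $a_3 = \epsilon^2/(2k)$, and $a_4 = 0$ for this function, a direct substitution gives $a_2a_4 - a_3^2 = -\epsilon^4/(4k^2)$, and taking modulus recovers the claimed bound $1/(4k^2)$.

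The main obstacle is entirely bookkeeping — correctly verifying the branch-selecting inequalities involving the algebraic number $k = \sqrt{2}+1$. Once that case check is carried out, the bound follows immediately from the cited theorem and the extremal function $f_2$ constructed in the proof of Theorem \ref{thm3} furnishes sharpness with no further work.
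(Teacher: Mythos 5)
Your overall strategy is exactly the paper's: the paper proves Theorem \ref{th3} in one line by substituting the coefficients of $\varphi_{\mathcal{R}}$ into Theorem 1 of \cite{MR3073988} and checking sharpness on $f_2$, and your reading of the coefficients ($B_1=1/k$, $B_2=2/k^2$, $B_3=2/k^3$) and your sharpness computation ($a_2=a_4=0$, $a_3=\epsilon^2/(2k)$, hence $|a_2a_4-a_3^2|=1/(4k^2)$) are both correct. The one genuine flaw is that the branch-selecting quantities you verify are the wrong ones: the inequalities $|3B_1^2-B_2|\le B_1$ and $|5B_1^4-6B_1^2B_2-3B_2^2+4B_1B_3|\le 3B_1^2$ are the hypotheses of Theorem \ref{Thm1}, which governs the \emph{inverse} coefficients $A_n$, not the direct functional $a_2a_4-a_3^2$. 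Expressing $a_2a_4-a_3^2$ through \eqref{f5}--\eqref{f8}, the coefficient of $c_1^2c_2$ is $B_1(B_2-B_1)/48$ and that of $c_1^4$ is $(-B_1^4+B_1^2-2B_1B_2+4B_1B_3-3B_2^2)/192$, so the first branch of \cite[Theorem 1]{MR3073988} is selected by the conditions $|B_2|\le B_1$ and $|B_1^4+3B_2^2-4B_1B_3|\le 3B_1^2$. Fortunately these also hold for $\varphi_{\mathcal{R}}$: indeed $|B_2|=2/k^2\le 1/k=B_1$ because $k=\sqrt{2}+1\ge 2$, and $B_1^4+3B_2^2-4B_1B_3=(1+12-8)/k^4=5/k^4\le 3/k^2$ because $3k^2=9+6\sqrt{2}>5$. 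So your conclusion $|a_2a_4-a_3^2|\le B_1^2/4=1/(4k^2)$ stands, but as written your case check does not verify the hypotheses of the theorem you are invoking; replace the two auxiliary quantities by the ones above.
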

\begin{theorem}\label{th2}
	Let $f\in\SC$ and $f(z)=z+\sum_{n=2}^{\infty}a_nz^n$, then \[|a_2a_3-a_4|\leq \frac{5220 + 3683 \sqrt{2} + 359 \sqrt{359 + 246 \sqrt{2}} + 	246 \sqrt{718 + 492 \sqrt{2}}}{1458(1+\sqrt{2})^5}\approx 0.244395.\]
\end{theorem}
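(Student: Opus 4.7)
The plan is to apply the Libera--Z\l otkiewicz parametrization (Lemma \ref{l2}) to $a_2 a_3 - a_4$ expressed via $c_1, c_2, c_3$, and then use the triangle inequality to reduce the estimate to a single-variable optimization on $[0,2]$.

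Substituting \eqref{f2}--\eqref{f4} into $a_2 a_3 - a_4$ and clearing denominators, the identities $k^2 = 2k+1$ (so $k(k-2)=1$) and $k^2 - 4k + 1 = -2\sqrt{2}$ reduce the mixed coefficients to integers and give
\[
a_2 a_3 - a_4 = \frac{\sqrt{2}\, c_1^3 + 2 c_1 c_2 - 2 k^2 c_3}{12 k^3}.
\]
By rotational invariance of $\SC$ I may take $c_1 = c \in [0,2]$, and Lemma \ref{l2} with $\delta = 4-c^2$ rewrites the numerator as $-\tfrac12\bigl[c^3 + 4 k c \delta \gamma - k^2 c \delta \gamma^2 + 2 k^2 \delta(1-|\gamma|^2) z\bigr]$, where the neat coefficients come from the simplifications $1+\sqrt{2} - k^2/2 = -\tfrac12$ and $1 - k^2 = -2k$. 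The triangle inequality with $|\gamma|, |\gamma|^2, (1-|\gamma|^2), |z|$ each bounded by $1$ then yields
\[
|a_2 a_3 - a_4| \le \frac{h(c)}{24 k^3}, \qquad h(c) := c^3 + (4k+k^2) c(4-c^2) + 2 k^2(4-c^2).
\]

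The remaining task is to maximize $h$ on $[0,2]$. Using $k^2 = 2k+1$, $h$ becomes a cubic with $h'$ quadratic; solving $h'(c) = 0$ and simplifying the discriminant through the identity $246 k + 113 = 359 + 246\sqrt{2}$ produces the unique admissible critical point
\[
c^* = \frac{\sqrt{359 + 246\sqrt{2}} - k^2}{9 k}.
\]
The critical relation $18 k c^{*2} + (8k+4) c^* = 24 k + 4$ lets us eliminate $c^{*2}$ from $h(c^*)$, reducing it first to $12 k c^{*3} + 2 k^2 c^{*2} + 8 k^2$ and, after one further substitution, to a linear function of $c^*$ of the form $[(452 k + 80) c^* + 192 k^2 - 4 k]/27$.

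The final step substitutes $c^*$, clears denominators, and rescales the numerator and denominator by $k/4$ to obtain denominator $1458 k^5 = 1458(1+\sqrt{2})^5$. Repeated application of $k^2 = 2k+1$ produces the identities $113 k^2 + 20 k = 359 + 246\sqrt{2}$ and $1537 k^2 + 609 k = 5220 + 3683\sqrt{2}$, while $\sqrt{2}\cdot\sqrt{359+246\sqrt{2}} = \sqrt{718+492\sqrt{2}}$ converts the $246\sqrt{2}\cdot\sqrt{359+246\sqrt{2}}$ term into the fourth radical in the statement, producing exactly the displayed closed form. The main obstacle is precisely this concluding bookkeeping: the bound lives in the module $\mathbb{Z}[k] + \mathbb{Z}[k]\cdot\sqrt{359+246\sqrt{2}}$, so one must carefully track the reductions through the minimal polynomial $k^2 - 2k - 1 = 0$, though no individual manipulation is hard.
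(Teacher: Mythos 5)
Your proposal is correct and follows essentially the same route as the paper's proof: the same representation of $a_2a_3-a_4$ in terms of $c_1,c_2,c_3$, the same application of Lemma \ref{l2} followed by the triangle inequality, and the same single-variable maximization (your $h(c)/(24k^3)$ is exactly the paper's $G(c)$, and your critical point $c^*=(\zeta-k^2)/(9k)$ coincides with the paper's $2(-k^2+\zeta)/(3\eta)$ since $\eta=6k$). The only differences are organizational: you reduce coefficients via $k^2=2k+1$ earlier and evaluate $h(c^*)$ through the critical-point relation rather than by direct substitution, which is a tidier but equivalent computation.
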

\begin{proof}
	Using expressions \eqref{f2}, \eqref{f3} and \eqref{f4}, we have
	\[a_2a_3-a_4=- \frac{1}{24k^3} \left((1-4k+k^2)c_1^3+4k(2-k)c_1c_2+4k^2c_3 \right).   \]
	Letting $c_1=c\in[0,2]$ and using Lemma \ref{l2}, we have
	\begin{equation*}a_2a_3-a_4 = -\frac{1}{24k^3} \big(c^3+4k(4-c^2)xc-k^2(4-c^2)x^2c  +2k^2(4-c^2)(1-|x|^2)z \big).   \end{equation*}
	With the help of  same technique as used in previous theorem, an application of triangle inequality and the fact that $1-|x|^2\leq 1$ give
	\begin{equation*}|a_2a_3-a_4|\leq \frac{1}{24k^3} \big( c^3+4k(4-c^2)\mu c +k^2(4-c^2)\mu^2 c+2k^2(4-c^2)\big)=:F(c,\mu).   \end{equation*}
	Since $\partial F/\partial \mu>0$ for any fixed $c\in[0,2]$ and for all $\mu\in[0,1]$, we can say that $F(c,\mu)$ is an increasing function of $\mu$ and hence \[ \max F(c,\mu)=F(c,1)=:G(c) \] where \[G(c)= \frac{1}{24k^3} \big(c^3+4k(4-c^2) c +k^2(4-c^2)c+2k^2(4-c^2)\big).\]
	Since \[G''\left(\frac{2(-k^{2}+M)}{3N}  \right)  =-0.31492<0\] where $\zeta=(-12k+45k^2+24k^3+4k^4)^{1/2}$ and $\eta=-1+4k+k^2$, the maximum value of $G$ occurs at $c=2(-k^{2}+\zeta)/(3\eta)$.
	Therefore \begin{align*} |a_2a_3-a_4|&\leq G\left(\frac{2(-k^{2}+\zeta)}{3\eta}  \right)\\
	&=\frac{144 k^4 + 16 k^5 - 24 \zeta + 12 k^2 (-15 + 4 \zeta) + k^3 (243 + 8 \zeta) + 	9 k (3 + 10 \zeta)}{81 k^{2} \eta^2} \\
	&=\frac{5220 + 3683 \sqrt{2} + 359 \sqrt{359 + 246 \sqrt{2}} +
	246 \sqrt{718 + 492 \sqrt{2}}}{1458(1+\sqrt{2})^5}. \qedhere \end{align*}
\end{proof}
Using  Equation \eqref{cor1},  Conjecture \ref{conj1}, Theorem \ref{th3} and Theorem \ref{th2}, we obtain the following bound on the third Hankel determinant for the functions in the class $\SC$:
\begin{conjecture}
	Let the function $f\in\SC$,  then \begin{align*}|H_3(1)|& \leq \frac{4293 + 1458 k + \frac{88 (144 k^4 + 16 k^5 - 24 \zeta + 12 k^2 (-15 + 4 \zeta) + k^3 (243 + 8 \zeta) +
		9 k (3 + 10 \zeta))}{\eta^2}}{3888 k^5} \\ &\approx 0.0563448\end{align*}
where $\zeta=(-12k+45k^2+24k^3+4k^4)^{1/2}$ and $\eta=-1+4k+k^2$.
\end{conjecture}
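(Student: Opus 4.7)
The plan is to apply the classical triangle-inequality expansion of the third Hankel determinant and substitute the four bounds already established (or conjectured) earlier in the paper. Starting from
\[
H_3(1) = a_3\bigl(a_2 a_4 - a_3^2\bigr) - a_4\bigl(a_4 - a_2 a_3\bigr) + a_5\bigl(a_3 - a_2^2\bigr),
\]
the triangle inequality gives
\[
|H_3(1)| \le |a_3|\,|a_2 a_4 - a_3^2| + |a_4|\,|a_4 - a_2 a_3| + |a_5|\,|a_3 - a_2^2|.
\]
Each of the five scalar quantities on the right is controlled by a result already in hand.

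For the coefficient moduli I would invoke Conjecture \ref{conj1}: evaluating the formula at $n=3,4,5$ yields $|a_3| \le 3/(2k^2)$, $|a_4| \le 11/(6k^3)$, $|a_5| \le 53/(24k^4)$. For the difference $|a_3 - a_2^2|$ I would specialize the Fekete--Szeg\"o estimate \eqref{cor1} to $\mu = 1$; since $k = \sqrt{2}+1 > 1$, the maximum is attained by the first branch, giving $|a_3 - a_2^2| \le 1/(2k)$. The second-Hankel term $|a_2 a_4 - a_3^2|\le 1/(4k^2)$ is provided directly by Theorem \ref{th3}, and $|a_4 - a_2 a_3|$ is supplied by Theorem \ref{th2}, namely
\[
|a_4 - a_2 a_3| \le \frac{144 k^4 + 16 k^5 - 24\zeta + 12 k^2(-15+4\zeta) + k^3(243+8\zeta) + 9k(3+10\zeta)}{81 k^2 \eta^2},
\]
with $\zeta$ and $\eta$ as defined there.

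Substituting and placing everything over the common denominator $3888 k^5$, the first summand contributes $\tfrac{3}{2k^2}\cdot\tfrac{1}{4k^2} = \tfrac{1458 k}{3888 k^5}$, the third contributes $\tfrac{53}{24 k^4}\cdot\tfrac{1}{2k} = \tfrac{4293}{3888 k^5}$, and the middle summand gives $\tfrac{11}{6 k^3}$ times the Theorem \ref{th2} bound, which after cancellation becomes $\tfrac{88\,(144 k^4+\cdots+9k(3+10\zeta))}{3888 k^5 \eta^2}$. Adding the three pieces reproduces exactly the expression in the conjecture, and numerical evaluation at $k=\sqrt{2}+1$ gives the stated $\approx 0.0563448$.

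The only genuine obstacle is that one of the four ingredients, Conjecture \ref{conj1}, is itself unproven: an unconditional proof of the present statement would require first establishing the conjectured coefficient bounds $|a_n| \le k^{1-n}\sum_{p=0}^{n-1}(-1)^p (n-p)/p!$ for $n=3,4,5$. This is precisely why the present statement is recorded as a conjecture rather than a theorem -- the derivation itself is a routine triangle-inequality calculation, but it rests on inputs that are not (yet) theorems.
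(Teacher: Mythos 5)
Your proposal is correct and is precisely the derivation the paper intends: the paper's one-line justification cites exactly the four ingredients you use (the Fekete--Szeg\"o estimate \eqref{cor1} with $\mu=1$, Conjecture \ref{conj1} for $|a_3|,|a_4|,|a_5|$, Theorem \ref{th3}, and Theorem \ref{th2}), combined via the triangle inequality on $H_3(1)$, and your arithmetic over the common denominator $3888k^5$ checks out. Your closing observation is also the right one --- the dependence on the unproven Conjecture \ref{conj1} (needed only for $|a_4|$ and $|a_5|$, since $|a_3|\leq 3/(2k^2)$ is already proved) is exactly why the statement is recorded as a conjecture.
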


\section{Radius Problems}\label{rad}
Raina and Sok\'o{\l} \cite{MR3419845} discussed the subclass $\mathscr{S}^*_q:=\mathscr{S}^*(z+\sqrt{1+z^2})$ associated with lune and Sharma \textit{et al.\@}  \cite{MR3536076} investigated the class $\mathscr{S}^*_C:= \mathscr{S}^*(1+4z/3+2z^2/3 )$ associated with cardiod. For $\alpha\in(0,1)$, Kargar \textit{et al.\@} \cite{MR3933532} (see also \cite{MR3804996}) studied the class  $\mathscr{BS}^*(\alpha):= \mathscr{S}^*(G_{\alpha})$ where $G_{\alpha}(z):=1+z/(1-\alpha z^2))$. Let $\mathscr{CS}^*(\alpha)$ be the class of close-to-star functions of type $\alpha$ which is defined by
\[\mathscr{CS}^*(\alpha)=\left\{f\in\mathscr{A}\colon \frac{f}{g}\in\mathscr{P},\; g\in\mathscr{S}^*(\alpha)  \right\}. \]
 Sok\'o{\l} and Stankiewicz \cite{MR1473947} estimated the radius of convexity for functions in the class $\mathscr{S}^*_L$.  Recently, Kumar and Ravichandran \cite{MR3496681} and Mendiratta \textit{et al.\@} \cite{MR3394060}  estimated the sharp $\SC$-radii and $\mathscr{S}^*_e$-radii, respectively for various well-known classes of functions. For example, they estimated the radius of convexity, $\SC$-radius and $\mathscr{S}^*_e$-radius for the class $\mathscr{S}^*[A,B]$,  $\mathcal{W}:=\{f\in\mathcal{A}\colon \real (f(z)/z) >0, z\in\mathbb{D}\}$, $\mathscr{F}_1:=\{f\in\mathscr{A}\colon f/g\in\mathscr{P} \text{ for some }g\in\mathcal{W} \}$, $\mathscr{F}_2:=\{f\in\mathscr{A}\colon |f(z)/g(z)-1|<1 \text{ for some }g\in\mathcal{W} \}$ and so forth. In this section, we  compute the sharp $\SC$-radius and $\mathscr{S}^*_e$-radius for various other well-known subclasses.
\begin{theorem}\label{thmD}
	The $\SC$-radii for the subclasses $\mathscr{CS}^*(\alpha)$, $\mathscr{S}^*_q$ and $\mathscr{BS}^*(\alpha)$, $\mathscr{M}(\beta)$-radius for the class $\SC$ and $\mathscr{S}^*_L$-radius for the class $\SC$ are given by:
\begin{enumerate}[(a)]
		\item $\mathscr{R}_{\SC} (\mathscr{CS}^*(\alpha))=\rho_0 :=(2-\alpha+\sqrt{7-6\alpha+\alpha^2})/(-3+2\alpha )$
		\item $\mathscr{R}_{\SC}(\mathscr{S}^*_q)=\big(-2+\sqrt{2}+\sqrt{-4+4\sqrt{2}}\big)/2\approx 0.350701$ which is the smallest positive root of the equation $4r^4-4r^2+(57-40\sqrt{2})=0$
		\item $\mathscr{R}_{\SC}(\mathscr{BS}^*(\alpha))=\big(-(3+2\sqrt{2})+\sqrt{4\alpha+17+12\sqrt{2}}\big)/2\alpha$
		\item $\mathscr{R}_{\mathscr{M}(\beta)} (\SC)=\begin{cases}
		1 \quad & \text{if } \beta \geq 2\\
		k(-\beta +\sqrt{\beta^2+4\beta-4}) /2\quad  &\text{if }\beta \leq 2
		\end{cases} $
		\item $\mathscr{R}_{\mathscr{S}^*_L}(\SC)=(-1 + \sqrt{2}) (-4 - 3 \sqrt{2} + \sqrt{62 + 44 \sqrt{2}})/2 \approx 0.601232$
\end{enumerate}respectively. The radii obtained are sharp.
\end{theorem}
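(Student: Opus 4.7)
The plan is to treat all five parts of Theorem~\ref{thmD} via one uniform template: for $f$ in the source class, $w(z):=zf'(z)/f(z)$ is, on $|z|\leq r$, confined to a Euclidean disk $\overline{\Delta}(a(r),\rho(r))$ whose centre and radius are computable in closed form from the source subordination; then $f$ belongs to the target class on $|z|\leq r$ iff $\overline{\Delta}(a(r),\rho(r))$ lies inside the image domain of the target superordinate function. The sharp radius is the largest $r$ for which containment holds, and sharpness is witnessed by an extremal source function whose $w(z_0)$ touches the target boundary at $|z_0|=r^{*}$.

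For parts (a)--(c) the target is $\SC$, so the key ingredient is the disk--characterisation lemma of Kumar and Ravichandran \cite{MR3496681} supplying a necessary and sufficient criterion $t\leq r_{\mathcal{R}}(a)$ for $\overline{\Delta}(a,t)\subset\varphi_{\mathcal{R}}(\mathbb{D})$, with $r_{\mathcal{R}}$ an explicit piecewise function of the centre $a$. The source disks are obtained as follows. For (a), any $f\in\mathscr{CS}^{*}(\alpha)$ factors as $f=gh$ with $g\in\mathscr{S}^{*}(\alpha)$ and $h\in\mathscr{P}$, so the well-known estimates
\[\left|\frac{zg'(z)}{g(z)}-\frac{1-\alpha r^{2}}{1-r^{2}}\right|\leq\frac{(1-\alpha)r}{1-r^{2}},\qquad\left|\frac{zh'(z)}{h(z)}\right|\leq\frac{2r}{1-r^{2}}\]
combine by the triangle inequality to enclose $w(\{|z|\leq r\})$ in a disk whose centre and radius are explicit rational functions of $r$. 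For (b) and (c) one directly locates the enclosing disks of $\{z+\sqrt{1+z^{2}}:|z|\leq r\}$ and $G_{\alpha}(\{|z|\leq r\})$. Inserting these into the $\SC$-criterion yields a quadratic in $r$ for (a) whose root is $\rho_{0}$, the quartic $4r^{4}-4r^{2}+(57-40\sqrt{2})=0$ for (b), and a quadratic for (c); in each case the smallest positive root in $(0,1)$ is selected.

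For parts (d) and (e) the source is $\SC$ itself, so $w(\{|z|\leq r\})\subset\varphi_{\mathcal{R}}(\{|z|\leq r\})$, and the extremal rightmost point on $|z|=r$ is the real value $\varphi_{\mathcal{R}}(r)=1+(r/k)(k+r)/(k-r)$. For (d) the condition $\real w<\beta$ reduces to $\varphi_{\mathcal{R}}(r)\leq\beta$, which rearranges to $r^{2}+k\beta r-k^{2}(\beta-1)=0$ and gives the advertised root $k(-\beta+\sqrt{\beta^{2}+4\beta-4})/2$; this root equals $1$ precisely when $\beta=2$, which explains the piecewise cut-off. For (e), the disk--in--lemniscate criterion of Mendiratta, Nagpal and Ravichandran translates the containment of the enclosing $\SC$-disk inside $\{w:|w^{2}-1|<1\}$ into an inequality on $(a(r),\rho(r))$ that simplifies to a quadratic in $r$ whose appropriate positive root produces the stated $\mathscr{S}^{*}_{L}$-radius.

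The main obstacle will be the algebraic simplification in (a) and (e), where nested-radical identities are needed to bring the root into the closed form listed, and care is required to select the correct branch of each square root so that $r^{*}\in(0,1)$; one must also verify, in the piecewise expression for $r_{\mathcal{R}}$, that the centre $a(r^{*})$ falls into the regime in which the chosen branch of $r_{\mathcal{R}}$ is the active one. Verifying sharpness is then essentially routine: the extremal functions for $\mathscr{CS}^{*}(\alpha)$, $\mathscr{S}^{*}_{q}$, $\mathscr{BS}^{*}(\alpha)$ together with the rotations $z\mapsto\varphi_{\mathcal{R}}(\epsilon z)$ for the $\SC$-source, evaluated at the boundary point where the enclosing disk is internally tangent to the target boundary, exhibit the equality.
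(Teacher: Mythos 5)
Your proposal follows the paper's proof along essentially the same route in all five parts: enclose $zf'(z)/f(z)$ in an explicit disk on $|z|\le r$ (via the additive decomposition $zf'/f=zg'/g+zp'/p$ with $p=f/g$ in (a), direct estimation of the superordinate in (b) and (c), and of $\varphi_{\mathcal{R}}$ itself in (d) and (e)), then apply the disk-containment criterion for $\varphi_{\mathcal{R}}(\mathbb{D})$ (Lemma \ref{rr}) or for the half-plane and the lemniscate, and finish with the same extremal functions. One concrete slip needs repair before part (a) goes through: for $g\in\mathscr{S}^*(\alpha)$ the enclosing disk of $zg'/g$ on $|z|=r$ has centre $\bigl(1+(1-2\alpha)r^2\bigr)/(1-r^2)$ and radius $2(1-\alpha)r/(1-r^2)$ (Lemma \ref{lem2} with $A=1-2\alpha$, $B=-1$), not the centre $(1-\alpha r^2)/(1-r^2)$ and half-sized radius you quote; with your constants the resulting quadratic would not have $\rho_0$ as its root. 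Your insistence on checking which branch of the piecewise quantity $r_a$ in Lemma \ref{rr} is the active one is not a formality: for part (a) the centre $a(r)=\bigl(1+(1-2\alpha)r^2\bigr)/(1-r^2)$ remains below $\sqrt{2}$ at the critical radius (e.g.\ $a\approx1.097$ when $\alpha=0$), so the binding constraint is the left-hand one, $\rho(r)\le a(r)-2(\sqrt{2}-1)$, rather than the right-hand inequality $\rho(r)\le 2-a(r)$ that the paper tests — this is a point where carrying out your check carefully will force you to redo, not merely reproduce, the paper's computation. A trivial further correction: the disk-in-lemniscate lemma used in (e) is that of Ali, Jain and Ravichandran, not Mendiratta, Nagpal and Ravichandran (whose lemma serves the exponential domain in Theorem \ref{thmE}).
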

The subclass of $\mathscr{P}$ which satisfies $\real p(z)>\alpha$ where $0\leq\alpha<1$ is denoted by $\mathscr{P}(\alpha)$. In general, for $|B|\leq 1$ and $A\not=B$, the class $\mathscr{P}[A,B]$ consists of all those functions $p$ with the normalization $p(0)=1$ satisfying $p(z)\prec (1+Az)/(1+Bz)$.   The following lemmas  will be used in our investigation:

\begin{lemma}\cite{MR0313493}\label{lem1}
	If $p\in \mathscr{P}(\alpha)$, then \[\left|\frac{zp'(z)}{p(z)} \right| \leq \frac{2r(1-\alpha)}{(1-r)(1+(1-2\alpha)r)}, \quad |z|=r<1. \]
\end{lemma}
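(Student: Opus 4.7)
The plan is to use the subordination structure of $\mathscr{P}(\alpha)$ together with Dieudonn\'e's refinement of the Schwarz--Pick inequality. First, $p \in \mathscr{P}(\alpha)$ is equivalent to $p \prec \phi$ where $\phi(w) := (1+(1-2\alpha)w)/(1-w)$ maps $\mathbb{D}$ conformally onto the half-plane $\{\real \zeta > \alpha\}$; hence $p = \phi \circ B$ for some Schwarz function $B$ (analytic on $\mathbb{D}$ with $B(0) = 0$ and $|B(z)|<1$). Differentiating the composition and using $\phi'(w) = 2(1-\alpha)/(1-w)^2$ gives the identity
\[
\frac{zp'(z)}{p(z)} \;=\; \frac{2(1-\alpha)\, zB'(z)}{(1-B(z))(1+(1-2\alpha)B(z))},
\]
so the lemma reduces to a uniform estimate on the right-hand side over Schwarz $B$.

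Next, I would bound $|zB'(z)|$ by applying Schwarz--Pick to the analytic self-map $h(z) := B(z)/z$ of $\overline{\mathbb{D}}$ (which lies in $\overline{\mathbb{D}}$ by Schwarz's lemma), obtaining Dieudonn\'e's inequality $|zB'(z) - B(z)| \leq (|z|^2 - |B(z)|^2)/(1-|z|^2)$. The triangle inequality then gives $|zB'(z)| \leq (1-|B(z)|)(|B(z)|+|z|^2)/(1-|z|^2)$. Substituting with $w_0 := B(z)$ and $|z| = r$ yields, for every $|w_0| \leq r$,
\[
\left|\frac{zp'(z)}{p(z)}\right| \;\leq\; \frac{2(1-\alpha)(1-|w_0|)(|w_0|+r^2)}{(1-r^2)\,|1-w_0|\,|1+(1-2\alpha) w_0|}.
\]

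The final step is to maximize this bound over $|w_0| \leq r$. Writing $w_0 = \rho e^{i\theta}$, the squared denominator product $|1-w_0|^2\,|1+(1-2\alpha) w_0|^2$ expands to a quadratic polynomial in $\cos\theta$; a direct analysis (the vertex may fall inside or outside $[-1,1]$ depending on parameters, but in each regime a comparison of boundary values shows the minimum on $[-1,1]$ is at $\cos\theta = 1$) places the minimum at $w_0 = \rho$ real positive. At this point $(1-|w_0|)$ cancels with $|1-w_0| = 1-\rho$, collapsing the bound to $2(1-\alpha)(\rho+r^2)/[(1-r^2)(1+(1-2\alpha)\rho)]$; its derivative in $\rho$ equals $(1-(1-2\alpha)r^2)/[(1+(1-2\alpha)\rho)]^2 > 0$ throughout $[0,r]$, so the supremum is attained at $\rho = r$. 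Using $1-r^2 = (1-r)(1+r)$ yields the desired bound $2r(1-\alpha)/[(1-r)(1+(1-2\alpha)r)]$, attained by the extremal $B(z) = z$ (i.e., $p = \phi$) at $z = r$. The main technical point is the direction-optimization: the factors $|1-w_0|$ and $|1+(1-2\alpha)w_0|$ are individually minimized at opposite arguments when $1-2\alpha \neq 0$, so verifying that their product is jointly minimized at $w_0 = \rho$ positive real requires the full parabolic analysis.
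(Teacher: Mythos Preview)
The paper does not supply its own proof of this lemma; it is simply quoted from Shah \cite{MR0313493} and used as a tool in Theorem~\ref{thmD}. So there is no in-paper argument to compare against, and your task was really to produce a self-contained proof.

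Your argument is correct. The composition identity
\[
\frac{zp'(z)}{p(z)}=\frac{2(1-\alpha)\,zB'(z)}{(1-B(z))(1+(1-2\alpha)B(z))}
\]
is right, and applying Schwarz--Pick to $h(z)=B(z)/z$ gives exactly the Dieudonn\'e bound $|zB'(z)-B(z)|\le(|z|^2-|B(z)|^2)/(1-|z|^2)$, from which $|zB'(z)|\le(1-|w_0|)(|w_0|+r^2)/(1-r^2)$ follows by the triangle inequality. The angular step you flag as delicate does go through cleanly: with $\beta=1-2\alpha$ and $c=\cos\theta$, the product $|1-w_0|^2|1+\beta w_0|^2$ is the quadratic $-4\beta\rho^2 c^2+2\rho(\beta-1)(1-\beta\rho^2)c+\text{const}$ in $c$. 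For $\beta\ge 0$ it is concave (or linear) in $c$, so the minimum over $[-1,1]$ sits at an endpoint, and the comparison $(1-\rho)(1+\beta\rho)\le(1+\rho)(1-\beta\rho)$ picks $c=1$. For $\beta<0$ it is convex with vertex at
\[
c^*=\frac{(1-\beta)(1-\beta\rho^2)}{4|\beta|\rho}\ge\frac{2\sqrt{|\beta|}\cdot 2\sqrt{|\beta|}\,\rho}{4|\beta|\rho}=1
\]
by AM--GM, so again the minimum on $[-1,1]$ is at $c=1$. After the cancellation $(1-|w_0|)/|1-w_0|=1$ at $w_0=\rho>0$, the radial monotonicity and the extremal $p=\phi$, $z=r$ are exactly as you describe. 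One small wording issue: your phrase ``in each regime a comparison of boundary values shows the minimum is at $\cos\theta=1$'' would not by itself cover the convex case if the vertex fell strictly inside $(-1,1)$; it is the AM--GM computation above that rules this out.
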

\begin{lemma}\cite{MR1624955}\label{lem2}
	If $p\in\mathscr{P}[A,B]$, then \[\left|p(z)-\frac{1-ABr^2}{1-B^2r^2} \right|\leq \frac{|A-B|r}{1-B^2r^2},  \quad |z|=r<1.  \]
\end{lemma}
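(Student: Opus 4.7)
The plan is to reduce the inequality to a routine computation of the image of a closed disk under a Möbius map, combined with a single application of the Schwarz lemma. First, I unwind the definition of $\mathscr{P}[A,B]$: the subordination $p\prec(1+Az)/(1+Bz)$ yields a Schwarz function $w$ with $w(0)=0$ and $|w(z)|<1$ on $\mathbb{D}$ such that
\[ p(z) = \frac{1+Aw(z)}{1+Bw(z)}. \]
By the Schwarz lemma, $|w(z)|\leq|z|=r$ whenever $|z|\leq r$. It therefore suffices to show that the M\"obius transformation $\phi(\zeta):=(1+A\zeta)/(1+B\zeta)$ sends the closed disk $\{|\zeta|\leq r\}$ into the closed disk
\[ \Bigl\{u\in\mathbb{C}\colon\Bigl|u-\tfrac{1-ABr^2}{1-B^2r^2}\Bigr|\leq\tfrac{|A-B|r}{1-B^2r^2}\Bigr\}, \]
since then $p(z)=\phi(w(z))$ automatically lies inside it.

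Because $A\neq B$, $\phi$ is a nonconstant M\"obius map, and because $|B|r<1$, its pole lies outside $\{|\zeta|\leq r\}$; consequently $\phi$ carries that closed disk to a genuine closed disk in $\mathbb{C}$ (rather than a half-plane). With $A,B$ real, $\phi$ preserves $\mathbb{R}\cup\{\infty\}$ and commutes with complex conjugation, so the image disk is symmetric about the real axis and its extreme real points $\phi(\pm r)$ form the endpoints of a horizontal diameter. A direct computation then yields the center as the midpoint
\[ \tfrac{1}{2}\!\left(\frac{1+Ar}{1+Br}+\frac{1-Ar}{1-Br}\right) = \frac{(1+Ar)(1-Br)+(1-Ar)(1+Br)}{2(1-B^2r^2)} = \frac{1-ABr^2}{1-B^2r^2}, \]
and the radius as half the separation
\[ \tfrac{1}{2}\!\left|\frac{1+Ar}{1+Br}-\frac{1-Ar}{1-Br}\right| = \frac{|(1+Ar)(1-Br)-(1-Ar)(1+Br)|}{2(1-B^2r^2)} = \frac{|A-B|r}{1-B^2r^2}, \]
matching the stated expressions exactly.

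The only delicate point is the justification that $\phi(r)$ and $\phi(-r)$ are genuinely antipodal on the image disk; this is what the real-axis symmetry buys us, and without it one would have to complete the square algebraically from the equivalent inequality $|u-1|^2\leq r^2|A-Bu|^2$ (obtained by inverting $u=\phi(\zeta)$ to $\zeta=(u-1)/(A-Bu)$ and squaring $|\zeta|\leq r$). That algebraic route is what would be needed if $A,B$ were allowed to be complex, in which case the center and radius become $(1-r^2A\bar B)/(1-r^2|B|^2)$ and $|A-B|r/(1-r^2|B|^2)$; since the formula in the statement involves $AB$ and $B^2$ rather than $A\bar B$ and $|B|^2$, the convention $A,B\in\mathbb{R}$ is implicit and the shorter geometric derivation above suffices. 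No machinery beyond the Schwarz lemma and the circle-preserving property of M\"obius maps is required.
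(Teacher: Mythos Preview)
Your argument is correct. The paper does not supply its own proof of this lemma; it is quoted from Ravichandran, R\o nning and Shanmugam \cite{MR1624955}, so there is nothing to compare against directly. Your derivation via the Schwarz lemma together with the elementary fact that a M\"obius map with real coefficients sends $\{|\zeta|\le r\}$ to a disk symmetric about $\mathbb{R}$, with $\phi(\pm r)$ as the real endpoints of a diameter, is exactly the standard route to this estimate and is complete as written. Your closing remark about the complex-coefficient case (center $(1-r^2A\bar B)/(1-r^2|B|^2)$) is also accurate and correctly identifies why the formula as stated presupposes $A,B\in\mathbb{R}$, which is indeed the convention in force here.
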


\begin{lemma}\cite{MR3496681}\label{rr}
	For $2(\sqrt{2}-1)<a<2$, let $r_a$ be defined by \[r_a=
	\begin{cases}
	a-2(\sqrt{2}-1),  &\text{if}\quad  2(\sqrt{2}-1)<a\leq \sqrt{2}; \\
	2-a, &\text{if} \quad \sqrt{2}\leq a<2.
	\end{cases} \]Then $\{w \in\mathbb{C} \colon |w-a|<r_a \} \subset
	\varphi_\mathcal{R}(\mathbb{D}) $  where \[\varphi_\mathcal{R}(\mathbb{D}):=\{w\in\mathbb{C}\colon | w+ (w^2+4w-4)^{1/2}|<2/k \}. \]
\end{lemma}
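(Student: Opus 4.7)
The plan is to identify $r_a$ as the Euclidean distance from $a$ to $\partial\varphi_\mathcal{R}(\mathbb{D})$; the desired inclusion then follows immediately, since any $w$ with $|w-a|<r_a$ lies in the same component of $\mathbb{C}\setminus\partial\varphi_\mathcal{R}(\mathbb{D})$ as $a$, namely $\varphi_\mathcal{R}(\mathbb{D})$. Because $\varphi_\mathcal{R}$ is univalent and symmetric about the real axis, $\varphi_\mathcal{R}(\mathbb{D})$ is a domain symmetric about $\mathbb{R}$ whose boundary meets $\mathbb{R}$ at exactly $\varphi_\mathcal{R}(1)=2$ and $\varphi_\mathcal{R}(-1)=2(\sqrt{2}-1)$; these two intersections explain the admissible range $a\in(2(\sqrt{2}-1),2)$.

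I would parametrise the boundary by $w(\theta)=\varphi_\mathcal{R}(e^{i\theta})$ and use the identity $k^{2}-1=2k$ (which holds for $k=\sqrt{2}+1$) to write
\[
w(\theta)-1=\frac{2\bigl[(\cos^{2}\theta+\cos\theta-1)+i\sin\theta(1+\cos\theta)\bigr]}{k^{2}-2k\cos\theta+1}.
\]
By reflection symmetry it suffices to minimise $\Phi(\theta):=|w(\theta)-a|^{2}$ on $\theta\in[0,\pi]$. Setting $t=\cos\theta$ and expanding gives $\Phi=N(t)/D(t)$ where $D(t)=k^{2}(k^{2}+1-2kt)>0$ and $N(t)=4k^{2}(1-a)t^{2}+2ak\bigl(k^{2}(1-a)+1\bigr)t+\bigl(k^{2}(1-a)-1\bigr)^{2}+a^{2}k^{2}$ is a quadratic in $t$. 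Evaluating at $t=\pm 1$ yields the two candidate values $\Phi(0)=(2-a)^{2}$ and $\Phi(\pi)=(a-2(\sqrt{2}-1))^{2}$.

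The remaining work is to show that for every $a$ in the admissible range the minimum of $\Phi$ on $[0,\pi]$ is attained at one of the two endpoints $\theta\in\{0,\pi\}$. The condition $\Phi'(\theta)=0$ is equivalent to $N'(t)D(t)+2k^{3}N(t)=0$, which is a quadratic in $t$ whose two roots can be written in closed form in terms of $a$ and $k$. One then verifies that whenever a root lies in the open interval $(-1,1)$ it corresponds to a local maximum of $\Phi$ (equivalently, a relative maximum of the distance from $a$ to the boundary), so that the minimum remains at $t=\pm 1$. A direct comparison of $(2-a)$ and $(a-2(\sqrt{2}-1))$ shows the crossover at $a=\sqrt{2}$, producing exactly the case split of the formula for $r_a$.

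The principal obstacle is the last step, namely ruling out closer boundary points off the real axis; this is a purely algebraic check on the quadratic arising from $\Phi'=0$, but the bookkeeping must be done carefully as $a$ varies in $(2(\sqrt{2}-1),2)$. Once the identity $\operatorname{dist}(a,\partial\varphi_\mathcal{R}(\mathbb{D}))=r_a$ is established, the stated inclusion $\{w\in\mathbb{C}:|w-a|<r_a\}\subset\varphi_\mathcal{R}(\mathbb{D})$ is immediate, since $a$ itself belongs to $\varphi_\mathcal{R}(\mathbb{D})$.
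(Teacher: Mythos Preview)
This lemma is not proved in the present paper at all: it is quoted from Kumar and Ravichandran \cite{MR3496681} and used as a black box, so there is no in-paper argument to compare against. Your overall strategy---identifying $r_a$ with $\operatorname{dist}(a,\partial\varphi_\mathcal{R}(\mathbb{D}))$ by parametrising the boundary as $w(\theta)=\varphi_\mathcal{R}(e^{i\theta})$, exploiting the real symmetry, and reducing to a one-variable minimisation on $[0,\pi]$---is exactly the natural route, and your computation of the real boundary points $\varphi_\mathcal{R}(1)=2$, $\varphi_\mathcal{R}(-1)=2(\sqrt{2}-1)$ together with the identity $k^{2}-1=2k$ is correct.

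However, what you have written is a plan rather than a proof. The step you yourself flag as ``the principal obstacle''---showing that any interior critical point of $\Phi$ on $(0,\pi)$ is a local \emph{maximum} of the distance---is asserted but not executed: you neither write out the quadratic $N'(t)D(t)+2k^{3}N(t)$ explicitly nor verify the required sign condition as $a$ ranges over $(2(\sqrt{2}-1),2)$. This is the entire content of the lemma; for a general starlike (non-convex) image domain the nearest boundary point to a real interior point need not lie on the real axis, so the conclusion genuinely depends on the specific shape of $\varphi_\mathcal{R}(\mathbb{D})$. To complete the argument you must either carry out that algebraic verification in full, or replace it by a geometric argument (e.g.\ monotonicity of $\operatorname{Re}w(\theta)$ and $|w(\theta)-a|$ on suitable subintervals) that forces the minimum to the endpoints.
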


\begin{proof}[Proof of Theorem \ref{thmD}](a)  Let the function $f\in\mathscr{CS}^*(\alpha)$ and the function $g\in\mathscr{S}^*(\alpha)$ be such that $p(z)=f(z)/g(z)\in\mathscr{P}$. Then $zg'(z)/g(z)\in\mathscr{P}(\alpha)$ and  Lemma \ref{lem2} gives \begin{equation*}
	\left|\frac{zg'(z)}{g(z)}-\frac{1+(1-2\alpha)r^2}{1-r^2} \right| \leq \frac{2(1-\alpha)r}{1-r^2}.
	\end{equation*}
	Since the function $p\in\mathscr{P}$, Lemma \ref{lem1} yields\begin{equation*}
	\left| \frac{zp'(z)}{p(z)}\right| \leq \frac{2r}{1-r^2}.
	\end{equation*} Using the above estimates in the identity \[\frac{zf'(z)}{f(z)}=\frac{zg'(z)}{g(z)}+\frac{zp'(z)}{p(z)} \] we can see that \begin{equation}\label{e13}
	\left|\frac{zf'(z)}{f(z)} - \frac{1+(1-2\alpha)r^2}{1-r^2}\right|\leq \frac{2(2-\alpha)r}{1-r^2}.
	\end{equation}
	Let $0\leq r\leq \rho_0$. Then it can be easily seen that if $a:=(1+(1-2\alpha)r^2)/(1-r^2)$, then $a \leq 2$.  Therefore from Lemma \ref{rr}, we can see that the disk \eqref{e13} lies inside the domain $\varphi_\mathcal{R}(\mathbb{D})$ if and only if
	\[ \frac{2(2-\alpha)r}{1-r^2} \leq2- \frac{1+(1-2\alpha)r^2}{1-r^2}. \]The last inequality reduces to $-1 + 2 (2 - \alpha) r + (3 - 2 \alpha) r^2\leq 0$, which holds if  $r\leq \rho_0$ and the result follows. Consider the functions $f$, $g\in\mathscr{A}$ defined by \[f(z)=\frac{z(1+z)}{(1-z)^{3-2\alpha}}\quad\text{and}\quad g(z)=\frac{z}{(1-z)^{2-2\alpha}}. \] Since \[\real \frac{f(z)}{g(z)} = \real \frac{1+z}{1-z}>0 \quad \text{and}\quad \real \frac{zg'(z)}{g(z)} = \real \frac{1+(1-2\alpha)z^2}{1-z^2}>\alpha  \] then $g\in\mathscr{S}^*(\alpha)$ and hence $f\in\mathscr{CS}^*(\alpha)$.  Also at the point $z=\rho_0$, we see that \begin{align*}\frac{zf'(z)}{f(z)} & =\frac{1+2(2-\alpha)z+(1-2\alpha)z^2}{1-z^2}\\  & = \frac{1+(1-2\alpha)\rho_0^2}{1-\rho_0^2}- \frac{2(2-\alpha)\rho_0}{1-\rho_0^2}= 2.\end{align*}
	This proves the sharpness of the result.
	
 \noindent (b) Since the function $f$ is in the class $\mathscr{S}^*_q$, for $|z|=r$, we have \begin{align}\left|\frac{zf'(z)}{f(z)}-1 \right| & \leq|z+\sqrt{1+z^2}-1|  \leq 1-r-\sqrt{1-r^2}.\label{e2} \end{align} In view of Lemma \ref{rr}, the disk \eqref{e2} lies in the domain $\varphi_\mathcal{R}(\mathbb{D})$ if $1-r-\sqrt{1-r^2} \leq 3-2\sqrt{2}$ or $4r^4-4r^2+(57-40\sqrt{2})\leq 0$ which gives the desired radius estimate and this estimate is best possible for the function \begin{equation}\label{e3} f_q(z):=z\exp(q(z)-\log (1-z+q(z))+\log 2-1 ). \end{equation}

\noindent (c) Let the function $f\in\mathscr{BS}^*(\alpha)$ and $|z|=r$.   Then a simple calculation yields
\begin{align}\left|\frac{zf'(z)}{f(z)}-1 \right|  &\leq\left|\frac{z}{1-\alpha z^2}\right|  \leq \frac{r}{1-\alpha r^2}.\label{e4} \end{align}
Using Lemma \ref{rr}, we see  that the disk \eqref{e4} is contained in the domain $\varphi_\mathcal{R}(\mathbb{D}$) if \[ \frac{r}{1-\alpha r^2}\leq  3-2\sqrt{2} \] or $\alpha r^2+( 3+2\sqrt{2})r-1\leq 0$. This gives the required radius estimate. The function defined by \begin{equation}\label{e7}
	 f_{B}(z):=z\exp \left(\frac{\tanh^{-1}(\sqrt{\alpha}z)}{\sqrt{\alpha}} \right)
	 \end{equation} proves that the estimation is sharp.

\noindent (d) Let the function $f \in\SC$.
	
\noindent  \textit{Case} 1. Let $\beta\geq 2$. For $|z|=r<1$, using the definition of subordination,  it is easy to see that \[\real \frac{zf'(z)}{f(z)} \leq \max_{|z|=r}\varphi_{\mathcal{R}}(z) \leq 1+\frac{r}{k}\left(\frac{k+r}{k-r}\right)<\frac{k^2+1}{k(k-1)}\leq\beta. \]
	 \textit{Case} 2. Let $\beta\leq  2$. For $|z|=r<k(-\beta +\sqrt{\beta^2+4\beta-4}) /2$, using the same technique as in Case 1, it follows that \[ \real \frac{zf'(z)}{f(z)} \leq 1+\frac{r}{k}\left(\frac{k+r}{k-r}\right)<\beta. \]
	 This proves the desired result.  Sharpness follows   for the function
\begin{equation}\label{eqR}
f_r(z)=\frac{k z}{(k-z)^2} e^{-z/k}.
\end{equation}
	
	 \noindent (e) Since $f \in\SC$, we have
	 \[|\varphi_\mathcal{R}(z)-1|^2 = \frac{r^2}{k^2}\left( \frac{k^2 + r^2 + 2 k r \cos t}{k^2 + r^2 - 2 k r \cos t} \right)  < (\sqrt{2}-1)^2\] if \[\left|\frac{zf'(z)}{f(z)}-1\right| <\sqrt{2}-1,\quad |z|=r< \frac{1}{2} (-1 + \sqrt{2}) (-4 - 3 \sqrt{2} + \sqrt{62 + 44 \sqrt{2}}).  \] Therefore the result follows from \cite[Lemma 2.2, p.~6559]{MR2879136}.  The radius estimate is sharp for the function $f_r$ defined by \eqref{eqR}.
\end{proof}
Next result yields the sharp radius estimates related to the class  $\mathscr{S}^*_e$.
\begin{theorem}\label{thmE}
 	The $\mathscr{S}^*_e$-radii for the subclasses $\mathscr{S}^*_L$, $\mathscr{S}^*_q$, $\SC$,  $\mathscr{S}^*_C$ and $\mathscr{BS}^*(\alpha)$ are given as:
	\begin{enumerate}[(a)]
		\item $\mathscr{R}_{\mathscr{S}^*_e}(\mathscr{S}^*_L)=(e^2-1)/e^2 \approx 0.864665$
		\item $\mathscr{R}_{\mathscr{S}^*_e}(\mathscr{S}^*_q)= (-2e+\sqrt{-4e^2+8e^4})/(4e^2) \approx 0.498824$ which is the smallest positive root of the equation $4r^4-4r^2+((e^2-1)/e^2)^2=0$
		\item $\mathscr{R}_{\mathscr{S}^*_e}(\SC)= (k - 2 e k +k \sqrt{1 - 8 e + 8 e^2} )/(2 e)\approx 0.780444 $
		\item $\mathscr{R}_{\mathscr{S}^*_e}(\mathscr{S}^*_C)=(-2e+\sqrt{10e^2-4e})/2e \approx 0.395772$
		\item $\mathscr{R}_{\mathscr{S}^*_e}(\mathscr{BS}^*(\alpha))=(-e+\sqrt{e^2+4(e-1)^2\alpha})/(2\alpha (e-1))$
	\end{enumerate}respectively. The results are all sharp.
\end{theorem}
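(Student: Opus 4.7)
My plan is to follow the template of Theorem \ref{thmD}, swapping in the geometry of the domain $\Omega_e := \{e^\zeta : |\zeta| < 1\}$ for that of $\varphi_\mathcal{R}(\mathbb{D})$. The essential ingredient—playing the role of Lemma \ref{rr}—is a fact due to Mendiratta \emph{et al.} \cite{MR3394060}: the largest open disk centered at $1$ and contained in $\Omega_e$ has radius $1 - e^{-1} = (e-1)/e$, the closest boundary point being $e^{-1}$ on the real axis. Consequently, if $f$ lies in one of the five subclasses $\mathcal{C}$ and one can show $|zf'(z)/f(z) - 1| \leq (e-1)/e$ on $|z| \leq r$, then $f \in \mathscr{S}^*_e$ on that disk.

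The first step in each case is to obtain a sharp bound $|zf'(z)/f(z) - 1| \leq \rho_0(r)$ valid for every $f \in \mathcal{C}$; this amounts to computing $\max_{|z|=r}|\varphi_0(z) - 1|$ where $\varphi_0$ is the Ma-Minda function defining $\mathcal{C}$. The second step is to solve $\rho_0(r) = (e-1)/e$ for the largest admissible $r$. Concretely: (a) $\varphi_0(z) = \sqrt{1+z}$ gives $\rho_0(r) = 1 - \sqrt{1-r}$, yielding $r = (e^2-1)/e^2$; (b) $\varphi_0(z) = z + \sqrt{1+z^2}$ leads to the displayed quartic upon clearing radicals; (c) $\varphi_0(z) = 1 + (z/k)((k+z)/(k-z))$ gives $\rho_0(r) = (r/k)((k+r)/(k-r))$; (d) $\varphi_0(z) = 1 + 4z/3 + 2z^2/3$ gives $\rho_0(r) = (4r + 2r^2)/3$; (e) $\varphi_0(z) = 1 + z/(1 - \alpha z^2)$ gives $\rho_0(r) = r/(1 - \alpha r^2)$. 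In each of (c)--(e), the resulting equation is a quadratic in $r$ whose relevant positive root matches the value stated.

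For sharpness, in every case I will exhibit the standard Ma-Minda extremal
\[
f_0(z) = z \exp\int_0^z \frac{\varphi_0(t) - 1}{t}\,dt,
\]
which satisfies $zf_0'(z)/f_0(z) = \varphi_0(z)$. Evaluating at the boundary point $z = -r_*$ (or whichever point on $|z|=r_*$ realises the bound $\rho_0(r_*) = (e-1)/e$), $\varphi_0$ lands on $\partial\Omega_e$ precisely at $e^{-1}$, so the radius $r_*$ cannot be enlarged.

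The main obstacle I anticipate is case (b). Optimizing $|z + \sqrt{1+z^2} - 1|$ over $|z| = r$ is not purely a real-axis problem; the maximum need not occur at $z = \pm r$, and its identification may require either parametrising $z = re^{i\theta}$ and differentiating, or exploiting the algebraic identity $(q(z) - 1)(q(z) + 1) = 2z\,q(z)$ (coming from $q(z) - 1/q(z) = 2z$) to reduce to a Cassini-oval computation. Once the correct $\rho_0(r)$ is secured, the equation $\rho_0(r) = (e-1)/e$ squares cleanly into the quartic $4r^4 - 4r^2 + ((e^2-1)/e^2)^2 = 0$ claimed in the statement.
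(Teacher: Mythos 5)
Your proposal follows exactly the paper's argument: the paper proves each part by bounding $|zf'(z)/f(z)-1|$ on $|z|=r$ via the defining Ma--Minda function, then invoking Lemma~\ref{re} with centre $a=1$ (so the admissible radius is $1-e^{-1}$, precisely the disk-inclusion fact you quote from Mendiratta \emph{et al.}), and finally checking sharpness with the standard extremals $z\exp\int_0^z(\varphi_0(t)-1)t^{-1}\,dt$. The bounds $\rho_0(r)$ you list and the resulting equations coincide with those in the paper (case (b) is handled there by reusing the lune estimate already derived in the proof of Theorem~\ref{thmD}), so this is essentially the same proof.
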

To prove our estimations, we will make use of the following result.

\begin{lemma}\cite{MR3394060}\label{re}
	For $1/e<a<e$, let $r_a$ be defined by \[r_a=
	\begin{cases}
	a-e^{-1}, & \text{if} \quad e^{-1}<a\leq (e+e^{-1})/2; \\
	e-a,  &\text{if}\quad  (e+e^{-1})/2\leq a<e.
	\end{cases} \]Then $\{w \in\mathbb{C} \colon |w-a|<r_a \} \subset \{w\in\mathbb{C}\colon |\log w|<1 \}$.
\end{lemma}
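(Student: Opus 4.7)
The lemma asserts that for $a$ on the positive real axis with $e^{-1} < a < e$, the open disk of radius $r_a := \min\{a - e^{-1},\, e - a\}$ centered at $a$ lies inside $\Omega_e := \{w : |\log w| < 1\}$, which is exactly the image $\exp(\mathbb{D})$. Equivalently, it is enough to prove $\operatorname{dist}(a, \partial\Omega_e) \geq r_a$.

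My plan is to parametrize the boundary as $\gamma(\theta) = \exp(e^{i\theta}) = e^{\cos\theta}\bigl(\cos(\sin\theta) + i\sin(\sin\theta)\bigr)$, $\theta \in [0, 2\pi)$, and minimize the squared distance
\[
F(\theta) = |\gamma(\theta) - a|^2 = e^{2\cos\theta} - 2a\,e^{\cos\theta}\cos(\sin\theta) + a^2.
\]
Since $F(-\theta) = F(\theta)$, it is enough to work on $[0, \pi]$. The endpoint values are $F(0) = (e - a)^2$ and $F(\pi) = (a - e^{-1})^2$, so the goal collapses to showing $F(\theta) \geq \min\{F(0), F(\pi)\}$ on $[0, \pi]$. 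Substituting $u = \cos\theta \in [-1, 1]$ (and using evenness of cosine to write $\cos(\sin\theta) = \cos\sqrt{1 - u^2}$) reduces the problem to the one-variable inequality
\[
G(u) := e^{2u} - 2ae^u \cos\sqrt{1 - u^2} + a^2 \;\geq\; \min\{G(-1),\,G(1)\} \qquad (u \in [-1, 1]).
\]

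The heart of the argument is a critical-point analysis of $G$. A direct differentiation gives $G'(u) = 0$ if and only if $a = \psi(u) := e^u / H(u)$, where
\[
H(u) := \cos\sqrt{1 - u^2} + \frac{u \sin\sqrt{1-u^2}}{\sqrt{1-u^2}}.
\]
Writing $\theta = \arccos u$ yields the clean identity $H(u)\sin\theta = \sin(\theta + \sin\theta)$, from which the boundary values $H(-1^+) = 0$ and $H(1) = 2$ (and hence $\psi(-1^+) = +\infty$, $\psi(1) = e/2$) are immediate. I would then verify that $\psi$ is strictly decreasing on $(-1, 1)$, which is equivalent to $H'(u) > H(u)$, by differentiating the identity above and invoking standard trigonometric estimates. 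Granting this, $G$ has at most one critical point in $(-1, 1)$; since $G'(-1) = 2/e^{2} > 0$ and $G'(1) = 2e(e - 2a)$, any such critical point must be a local maximum of $G$, so the minimum of $G$ on $[-1, 1]$ is attained at an endpoint.

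The proof finishes with a short case check comparing $G(-1)$ and $G(1)$. For $a \leq (e + e^{-1})/2$ one has $(a - e^{-1})^2 \leq (e - a)^2$, so $\min G = G(-1) = (a - e^{-1})^2 = r_a^2$; for $a \geq (e + e^{-1})/2$ the comparison reverses and $\min G = G(1) = (e - a)^2 = r_a^2$. In either case $\operatorname{dist}(a, \partial\Omega_e) = r_a$, which immediately yields the desired disk inclusion. The main obstacle is the monotonicity of $\psi$ (equivalently $H' > H$ on $(-1, 1)$); the remaining steps are routine calculus.
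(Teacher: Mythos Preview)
The paper does not supply a proof of this lemma; it is quoted from \cite{MR3394060} and used as a black box in the proof of Theorem~\ref{thmE}. There is therefore no argument in the paper to compare your proposal against.

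Your outline is nonetheless sound, and the step you flag as the main obstacle actually closes with a one–line computation. With $s=\sqrt{1-u^{2}}\in(0,1)$ one finds
\[
H'(u)=\frac{(1+u)\sin s}{s}-\frac{u^{2}(s\cos s-\sin s)}{s^{3}},
\]
and subtracting $H(u)=\cos s+u\,\dfrac{\sin s}{s}$ and replacing $u^{2}$ by $1-s^{2}$ gives, after routine simplification,
\[
H'(u)-H(u)=\frac{\sin s-s\cos s}{s^{3}}.
\]
The numerator vanishes at $s=0$ and has derivative $s\sin s>0$ on $(0,1)$, so it is strictly positive there. Hence $H'>H$ on $(-1,1)$ and $\psi(u)=e^{u}/H(u)$ is strictly decreasing, exactly as you needed. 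The rest of your argument then goes through as written: $G$ has at most one interior critical point, which (when it exists) is a local maximum because $G'(-1)=2e^{-2}>0$ while $G'(1)=2e(e-2a)$; consequently $\min_{[-1,1]}G\in\{G(-1),G(1)\}=\{(a-e^{-1})^{2},(e-a)^{2}\}=r_a^{2}$, and since $a\in\Omega_e$ the open disk $|w-a|<r_a$ lies in $\Omega_e$.
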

\begin{proof}[Proof of Theorem \ref{thmE}]
 (a) Since the function $f\in\mathscr{S}^*_L$, we have
	\begin{equation*}
	\left|\frac{zf'(z)}{f(z)}-1 \right| =|\sqrt{1+z}-1|\leq 1-\sqrt{1-r}, \qquad |z|\leq r.
	\end{equation*}
	By applying Lemma \ref{re}, we note that the function $f\in\mathscr{S}^*_e$ if $1-\sqrt{1-r}\leq 1-1/e$ which leads to the inequality $r\leq e^2-1/e^2$. The obtained radius estimate is sharp for the function $f_L$ defined as \[\frac{zf'_L(z)}{f'_L(z)}=\sqrt{1+z}. \]
	(b) In view of Lemma \ref{re}, we see that the disk \eqref{e2} is contained in the domain $e^z(\mathbb{D}):=\{w\in\mathbb{C}\colon|\log w|<1 \}$ provided $1-r-\sqrt{1-r^2}\leq 1-1/e$ or $r+\sqrt{1-r^2}\geq 1/e$ or equivalently \[4r^4-4r^2+\left(\frac{e^2-1}{e^2} \right)^2\leq 0.\] The previous inequality yields the desired estimate for the radius. The sharpness follows for the function $f_q$ defined by \eqref{e3}.
	
\noindent(c) Let the function $f\in\SC$. Then  \begin{equation}\label{e5}
\left|\frac{zf'(z)}{f(z)}-1 \right|\leq \frac{r}{k}\left(\frac{k+r}{k-r} \right), \qquad |z|\leq r.
\end{equation}
Using Lemma \ref{re}, we see that the disk \eqref{e5} lies in the domain $\{w\in\mathbb{C}\colon|\log w|<1 \}$ if \[ \frac{r}{k}\left(\frac{k+r}{k-r} \right) \leq1-\frac{1}{e}. \]The above inequality simplies to \[er^2+k(2e-1)r-k^2 (e-1)\leq 0 \] which gives the required radius estimate.  Sharpness follows for the function $f_r$ defined by \eqref{eqR}.

(d) Let the function $f\in\mathscr{S}^*_C$. Then a simple calculation gives \begin{equation}\label{e6}
\left|\frac{zf'(z)}{f(z)}-1 \right|\leq \frac{1}{3}(4r+2r^2), \qquad |z|\leq r.
\end{equation}
Using Lemma \ref{re}, we note that the disk \eqref{e6} is contained in the domain $\{w\in\mathbb{C}\colon|\log w|<1 \}$ if $ (4r+2r^2)/3\leq1-1/e$ or $2er^2+4er-3(e-1)\leq0$. The last inequality gives \[r\leq  \frac{-2e+\sqrt{10e^2-6e}}{2 e}. \] The result is sharp for the function \[f_C(z):=z\exp\left(\frac{4z}{3}+\frac{2z^2}{3} \right).\]
(e) Using Lemma \ref{re}, note that the disk \eqref{e4} lies in the domain $e^z(\mathbb{D})$ provided\[\frac{r}{1-\alpha r^2}\leq 1-\frac{1}{e}. \] By a  simple computation, the last inquality becomes $\alpha(e-1)r^2+er-(e-1)\leq0$ which gives \[r\leq \frac{-e+\sqrt{e^2+4(e-1)^2\alpha}}{2\alpha(e-1)}. \] The function $f_B$ defined by \eqref{e7} shows the sharpness of this radius estimate.
\end{proof}

\end{document}